\newtheorem{theorem}{Theorem}[section]
\newtheorem{lemma}[theorem]{Lemma}
\newtheorem{proposition}[theorem]{Proposition}
\newtheorem{fact}[theorem]{Fact}
\newtheorem{claim}[theorem]{Claim}
\newtheorem{problem}[theorem]{Problem}
\newcounter{maintheorem}
\newtheorem{mainth}[maintheorem]{Theorem}
\theoremstyle{remark}
\newtheorem{remark}[theorem]{Remark}
\theoremstyle{definition}
\newtheorem{definition}[theorem]{Definition}
\numberwithin{equation}{section}
\newcommand{\R}{\mathbb{R}}
\newcommand{\e}{\varepsilon}
\newcommand{\p}{\varphi}
\newcommand{\nn}[1]{{\left\vert\kern-0.25ex\left\vert\kern-0.25ex\left\vert #1 
\right\vert\kern-0.25ex\right\vert\kern-0.25ex\right\vert}}
\newcommand{\cut}{\mathord{\upharpoonright}}
\renewcommand{\leq}{\leqslant}
\renewcommand{\geq}{\geqslant}
\newcommand{\ro}{\varrho}
\renewcommand{\P}{\mathcal{P}}
\newcommand{\C}{\mathcal{C}}
\newcommand{\M}{\mathcal{M}}
\newcommand{\D}{\mathcal{D}}
\newcommand{\A}{\mathcal{A}}
\newcommand{\Fr}{{\mathcal{F}_\ro}}
\newcommand{\FA}{\mathcal{F_A}}
\newcommand{\KA}{\mathcal{K_A}}
\newcommand{\K}{\mathcal{K}}
\newcommand{\Kr}{{\mathcal{K}_\ro}}
\newcommand{\X}{\mathcal{X}}
\newcommand{\Xr}{{\mathcal{X}_\ro}}
\newcommand{\Xl}{{\mathcal{X}_\Lambda}}
\renewcommand\qedsymbol{$\blacksquare$} 
\newcounter{smallromans}
\newenvironment{romanenumerate}
{\begin{list}{{\normalfont\textrm{(\roman{smallromans})}}}
  {\usecounter{smallromans}\setlength{\itemindent}{0cm}
   \setlength{\leftmargin}{5.5ex}\setlength{\labelwidth}{5.5ex}
   \setlength{\topsep}{.5ex}\setlength{\partopsep}{.5ex}
   \setlength{\itemsep}{0.1ex}}}
{\end{list}}
\begin{document}
\title[An Asplund space with norming M-basis that is not WCG]{An Asplund space with norming Marku\v{s}evi\v{c} basis\\ that is not weakly compactly generated}

\author[P.~H\'ajek]{Petr H\'ajek}
\address[P.~H\'ajek]{Department of Mathematics\\Faculty of Electrical Engineering\\Czech Technical University in Prague\\Technick\'a 2, 166 27 Prague 6\\ Czech Republic}
\email{hajek@math.cas.cz}

\author[T.~Russo]{Tommaso Russo}
\address[T.~Russo]{Institute of Mathematics\\ Czech Academy of Sciences\\ \v{Z}itn\'a 25, 115 67 Prague 1\\ Czech Republic; and Department of Mathematics\\Faculty of Electrical Engineering\\Czech Technical University in Prague\\Technick\'a 2, 166 27 Prague 6\\ Czech Republic}
\email{russo@math.cas.cz, russotom@fel.cvut.cz}

\author[J.~Somaglia]{Jacopo Somaglia}
\address[J.~Somaglia]{Dipartimento di Matematica ``F. Enriques'' \\Universit\`a degli Studi di Milano\\Via Cesare Saldini 50, 20133 Milano\\Italy}
\email{jacopo.somaglia@unimi.it}

\author[S.~Todor\v{c}evi\'{c}]{Stevo Todor\v{c}evi\'{c}}
\address[S.~Todor\v{c}evi\'{c}]{Department of Mathematics\\ University of Toronto\\ Toronto\\ Ontario M5S 2E4\\ Canada; Institut de Math\'ematiques de Jussieu\\ Paris\\ France; and Matemati\v{c}ki Institut\\ SANU\\ Belgrade\\ Serbia}
\email{stevo@math.utoronto.ca, stevo.todorcevic@imj-prg.fr, stevo.todorcevic@sanu.ac.rs}

\thanks{P.~H\'ajek was supported in part by OPVVV CAAS CZ.02.1.01/0.0/0.0/16$\_$019/0000778.\\
Research of T.~Russo was supported by the GA\v{C}R project 20-22230L; RVO: 67985840 and by Gruppo Nazionale per l'Analisi Matematica, la Probabilit\`a e le loro Applicazioni (GNAMPA) of Istituto Nazionale di Alta Matematica (INdAM), Italy.\\ 
J.~Somaglia was supported  by Universit\`a degli Studi di Milano, Research Support Plan 2019 and by Gruppo Nazionale per l'Analisi Matematica, la Probabilit\`a e le loro Applicazioni (GNAMPA) of Istituto Nazionale di Alta Matematica (INdAM), Italy.\\
Research of S.~Todor\v{c}evi\'{c} is partially supported by grants from NSERC(455916) and CNRS(UMR7586).}

\dedicatory{Dedicated to Clemente Zanco on the occasion of his retirement}
\date{\today}
\subjclass[2010]{46B26, 46B20 (primary), and 03E05, 46A50, 54G20 (secondary).}
\keywords{Norming Marku\v{s}evi\v{c} basis, Asplund Banach space, weakly compactly generated Banach space, $\ro$-function, semi-Eberlein compact space}

\begin{abstract} We construct an Asplund Banach space $\mathcal{X}$ with a norming Marku\v{s}evi\v{c} basis such that $\mathcal{X}$ is not weakly compactly generated. This solves a long-standing open problem from the early nineties, originally due to Gilles Godefroy. \emph{En route} to the proof, we construct a peculiar example of scattered compact space, that also solves a question due to Wies\l aw Kubi\'s and Arkady Leiderman.
\end{abstract}
\maketitle

\section{Introduction}

The crystallisation, in the mid-sixties, of the notions of projectional resolution of the identity (PRI, for short) \cite{Lind refl} and of weakly compactly generated Banach space (WCG) \cite{AmirLind} opened the way to a spectacular development in Banach space theory, leading to a structural theory for many classes of non-separable Banach spaces. Just to mention some advances, we refer, \emph{e.g.}, to \cite{ArMe WLD}, \cite{DFJP}, \cite{Finet}, \cite{JL}, \cite{Rosenthal}, \cite{Talagrand}, \cite{Zizler}. Such a theory is tightly connected to differentiability \cite{AK}, \cite{FGZ1}, \cite{FGZ2}, \cite{FHZ}, \cite{FMZ}, \cite{JZ2}, classes of compacta \cite{AMN}, \cite{BRW}, \cite{BS}, \cite{Corson}, \cite{CorsonLind}, \cite{Farmaki}, \cite{Kalenda survey}, \cite{Lind wc}, combinatorics \cite{Argyros}, \cite{DLT1}, \cite{DLT2}, \cite{LT2}, \cite{Ost}, \cite{Shelah}, \cite{T scheme}, \cite{T MA}.

An important tool in the area was introduced by Fabian \cite{Fabian dual LUR}, who used Jayne--Rogers selectors \cite{JR} to show that every weakly countably determined Asplund Banach space is indeed WCG. Jayne--Rogers selectors were also deeply involved, together with Simons' lemma \cite{Simons}, \cite{Godefroy}, in the proof that the dual of every Asplund space admits a PRI, \cite{FG}. The techniques of \cite{Fabian dual LUR} also used ingredients from \cite{JZ2}, where it is shown, among others, that WCG Banach spaces with a Fr\'echet smooth norm admit a shrinking M-basis. Results of this nature led to the conjecture that Asplund Banach spaces with a norming M-basis are WCG. This question is originally due to Godefroy, who, at the times when \cite{DGZ} was in preparation, conjectured that a similar use of Jayne--Rogers selectors might produce a linearly dense weakly compact subset, in presence of a norming M-basis.

\begin{problem}[G.~Godefroy]\label{Problem} Let $\X$ be an Asplund space with a norming Marku\v{s}evi\v{c} basis. Must $\X$ be weakly compactly generated?
\end{problem}
The problem was subsequently recorded in various articles and books, see, \emph{e.g.}, \cite{AP}, \cite[p.~211]{HMVZ}, \cite[Problem~112]{GMZ}. The main result of our paper is a negative answer to this problem, in the form of the following result.

\begin{mainth}\label{MTh: Xr} There exists an Asplund space $\X$ with a $1$-norming M-basis such that $\X$ is not WCG.
\end{mainth}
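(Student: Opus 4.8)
The plan is to build the space $\X$ as a $C(K)$-type space (or, more precisely, as a subspace of $C(K)$ together with an auxiliary $\ell_1$-type summand) over a carefully designed scattered compact $K$. As the abstract announces, the construction of $K$ is the combinatorial heart of the matter: one wants a scattered (hence Asplund-friendly) compact space that is \emph{semi-Eberlein}, so that it carries enough continuous functions vanishing off countable sets to produce a norming M-basis, but which fails to be Eberlein, so that $C(K)$ fails to be WCG. Concretely, I would start from a tree-like or adequate family on some uncountable index set $\Gamma$, use a Todor\v{c}evi\'c-style walks/$\ro$-function apparatus (the paper's notation $\ro$, $\Fr$, $\Kr$ strongly suggests this) to encode a coherent sequence, and let $K=\Kr$ be the associated compact space of $\{0,1\}$-valued (or finitely-valued) functions. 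The $\ro$-function should be chosen so that the resulting compact is scattered of finite height, and so that the characteristic-type functions $\{e_\gamma\}_{\gamma\in\Gamma}$ it naturally carries, together with a biorthogonal system of measures $\{\mu_\gamma\}$, form a $1$-norming M-basis of the closed span $\X:=\overline{\operatorname{span}}\{e_\gamma\}$.

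The proof then splits into three verifications. First, \textbf{$\X$ is Asplund}: this follows once one checks that $K$ (or the relevant compact in the dual unit ball) is scattered, since $C(K)$ is Asplund for scattered $K$, and Asplundness passes to subspaces; scatteredness of $\Kr$ is arranged directly from the combinatorial definition, typically by an induction on the height computing the Cantor--Bendixson derivatives. Second, \textbf{$\X$ has a $1$-norming M-basis}: here one exhibits $\{e_\gamma;\mu_\gamma\}_{\gamma\in\Gamma}$ with $\mu_\alpha(e_\beta)=\delta_{\alpha\beta}$, with $\{e_\gamma\}$ linearly dense in $\X$ by construction, with $\{\mu_\gamma\}$ separating points of $\X$ (this is where the coherence of the $\ro$-function is used: it guarantees that each element of $\X$ is determined by its countably many nonzero coordinates), and with the $\mu_\gamma$ being $1$-norming, which follows because the $e_\gamma$ are built from characteristic functions of clopen sets so that point evaluations, suitably combined, recover the sup norm. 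Third, \textbf{$\X$ is not WCG}: the cleanest route is to show $\X$ (or its weak topology on the unit ball) generates a compact that is not Eberlein — e.g. by exhibiting inside $B_{\X^*}$, with the weak$^*$ topology, a copy of the one-point compactification of a discrete set of size $\aleph_1$ sitting in a way incompatible with Eberlein-ness, or by showing $\X$ contains an isomorphic copy of a non-WCG space like a long James space or $C(K)$ for $K$ non-Eberlein; alternatively one argues directly that any weakly compact $W\subset\X$ has separable (or at least "small") support in $\Gamma$, using the Asplund/Radon--Nikod\'ym structure of $\X^*$ together with the coherence, so no weakly compact set can be linearly dense.

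The main obstacle is the simultaneous satisfaction of the two antagonistic requirements on $K$: being semi-Eberlein (to manufacture the norming M-basis and keep the space Asplund via scatteredness) while not being Eberlein (to defeat WCG). Eberlein compacta that are scattered are exactly the ones of the form considered classically, and the generic scattered semi-Eberlein examples tend to collapse to Eberlein; forcing the failure of Eberlein-ness typically requires a genuinely uncountable coherent combinatorial obstruction, which is precisely what a Todor\v{c}evi\'c $\ro$-function on $\omega_1$ (or a minimal walk structure) supplies in ZFC. Thus the delicate point is verifying that the specific $\ro$ chosen yields a compact that is provably (in ZFC) not Eberlein while still retaining enough "Corson-type" or "semi-Eberlein" behaviour to run the M-basis argument — balancing these is where the real work, and the novelty over earlier partial results, will lie.
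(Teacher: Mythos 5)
Your overall architecture matches the paper's: a compact $\Kr\subseteq\P(\omega_1)$ built from a Todor\v{c}evi\'{c} $\ro$-function, the subspace $\X=\overline{\rm span}\{f_\gamma\}_{\gamma<\omega_1}\subseteq\C(\Kr)$ spanned by the coordinate characteristic functions $f_\gamma(A)=1_{A}(\gamma)$ with $\mu_\gamma=\delta_{\{\gamma\}}$, Asplundness from scatteredness of $\Kr$, and failure of WCG read off from the dual ball (no auxiliary $\ell_1$-summand is needed). The genuine gap is that you never isolate the property of $\Kr$ that makes all three verifications run, namely the dichotomy: $\Kr$ is the pointwise closure of a family $\Fr$ of \emph{finite} subsets of $\omega_1$, and every \emph{infinite} member of $\Kr$ is an initial interval $[0,\alpha)$, $\alpha\leq\omega_1$. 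The finiteness of the members of the dense set $\Fr$ is exactly what makes the basis $1$-norming: for $A\in\Fr$ one has $\delta_A\cut_\X=\sum_{\alpha\in A}\delta_{\{\alpha\}}\cut_\X\in{\rm span}\{\mu_\gamma\cut_\X\}_{\gamma<\omega_1}$, and density of $\Fr$ gives $\|f\|=\sup_{A\in\Fr}|\langle\delta_A,f\rangle|$ for $f\in\X$. Your phrase ``point evaluations, suitably combined, recover the sup norm'' is not an argument until this is in place; and the presence of all initial intervals among the infinite members is what simultaneously produces the obstruction to WCG and (with a separate Zorn's lemma argument) scatteredness. Note also that $\Kr$ is \emph{not} of finite Cantor--Bendixson height, since the initial intervals form a copy of $[0,\omega_1]$ inside it.

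Two of your proposed routes to non-WCG are flawed as stated. The one-point compactification of a discrete set of size $\aleph_1$ is a (uniform) Eberlein compact, so exhibiting it inside $(B_{\X^*},w^*)$ proves nothing; the object to embed is the ordinal segment $[0,\omega_1]$ with its order topology, which is not Corson because it is not Fr\'echet--Urysohn, and which embeds via $\beta\mapsto\delta_{[0,\beta)}\cut_\X$ precisely because every $[0,\beta)$ lies in $\Kr$ and the $f_\gamma$ separate the points of $\Kr$. This shows $\X$ is not WLD, and one concludes that $\X$ is not WCG by invoking the equivalence of WCG and WLD for Asplund spaces (Theorem \ref{Th: shrinking M-basis}). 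Likewise, ``$\X$ contains a non-WCG subspace'' does not by itself defeat WCG, since WCG is not hereditary (Rosenthal); that implication is salvaged here only because WLD is hereditary and coincides with WCG on Asplund spaces, which again must be said explicitly.
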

The proof of Theorem \ref{MTh: Xr} will be given in Section \ref{Sec: Proof Th A}. As a matter of fact, the M-basis that we construct there is additionally an Auerbach basis, see Remark \ref{Rmk: M-basis is Auerbach}. Moreover, a small elaboration over the argument also produces a counterexample with a long monotone Schauder basis (Theorem \ref{Th: Xr with basis}).\smallskip

Since the result \cite{AmirLind} that WCG Banach spaces admit an M-basis, and, therefore, reflexive spaces have a shrinking basis, it readily became clear that M-bases with additional properties would have been instrumental in the characterisation of several classes of Banach spaces, \cite[Chapter 6]{HMVZ}, \cite{Zizler}. In particular, it was natural to ask which class of Banach spaces is characterised by the presence of a norming M-basis. This led to the famous question, due to John and Zizler, whether every WCG Banach space admits a norming M-basis \cite{JZ3}, that was recently solved in the negative by the first-named author, \cite{Hajek}. In this sense, Problem \ref{Problem} can be considered as a converse to the said John's and Zizler's question.

As it turns out, there is an elegant characterisation of Banach spaces that admit a shrinking M-basis, in the form of the following result, due to the efforts of many mathematicians, \cite{Fabian dual LUR}, \cite{Valdivia}, \cite{OV}, \cite{JZ1}, \cite{JZ2}.  We refer to \cite[Theorem 6.3]{HMVZ}, or \cite[Theorem 8.3.3]{Fabian book} for a proof.

\begin{theorem} \label{Th: shrinking M-basis} For a Banach space $\X$, the following are equivalent:
\begin{romanenumerate}
    \item $\X$ admits a shrinking M-basis;
    \item $\X$ is WCG and Asplund;
    \item $\X$ is WLD and Asplund;
    \item $\X$ is WLD and $\X^*$ has a dual LUR norm;
    \item $\X$ is WLD and it admits a Fr\'echet smooth norm.
\end{romanenumerate}
\end{theorem}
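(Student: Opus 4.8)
The plan is to organise the five conditions around the equivalence $(ii)\Leftrightarrow(iii)$, establishing $(i)\Leftrightarrow(ii)$, $(ii)\Leftrightarrow(iii)$ and $(ii)\Rightarrow(iv)\Rightarrow(v)\Rightarrow(iii)$, which together make every condition reachable from every other. All the genuine analysis is concentrated in one place: the Jayne--Rogers selection theorem, used in conjunction with the fact that $\X$ is Asplund exactly when the dual ball $(B_{\X^*},w^*)$ is fragmented by the norm (equivalently, $\X^*$ has the Radon--Nikod\'ym property). Everything else is transfinite bookkeeping along a projectional resolution of the identity (PRI), together with the elementary duality between rotundity and smoothness.

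\emph{The soft links.} $(ii)\Rightarrow(iii)$ is immediate, since WCG implies WLD. For $(i)\Rightarrow(ii)$: normalising a shrinking M-basis $\{x_\alpha;x_\alpha^*\}_{\alpha\in\Gamma}$ one checks, using that $\{x_\alpha^*\}$ is fundamental in $\X^*$, that for every $f\in\X^*$ and $\e>0$ only finitely many $\alpha$ have $|f(x_\alpha)|>\e$; hence $\{x_\alpha:\alpha\in\Gamma\}\cup\{0\}$ is a weakly compact, linearly dense subset of $\X$ (so $\X$ is WCG), the same estimate shows $(B_{\X^*},w^*)$ embeds into a $\Sigma$-product and is Corson, and a standard separable reduction (a suitable countable subfamily of a shrinking M-basis is again shrinking, so it spans a subspace with separable dual) shows $\X$ is Asplund. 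For $(iv)\Rightarrow(v)$: if $\nn{\cdot}$ is a dual LUR norm on $\X^*$, $x$ lies on the $\nn{\cdot}$-predual unit sphere of $\X$ and $f$ is a support functional at $x$, then any $f_n$ on the $\nn{\cdot}$-sphere with $f_n(x)\to1$ force $\nn{f_n+f}\to2$, whence $\nn{f_n-f}\to0$ by local uniform rotundity, and \v Smulian's criterion yields a Fr\'echet smooth predual norm on $\X$ (WLD is untouched). For $(v)\Rightarrow(iii)$: the existence of a Fr\'echet smooth norm makes $\X$ Asplund by the classical theorem of Asplund, and WLD is carried along.

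\emph{The transfinite constructions.} Both $(ii)\Rightarrow(i)$ and $(ii)\Rightarrow(iv)$ proceed by induction on $\dens\X$. The WCG hypothesis supplies a PRI $(P_\mu)_{\omega\le\mu\le\dens\X}$ with separable increments $(P_{\mu+1}-P_\mu)\X$, and, these being Asplund, they have separable duals. In the separable base case one invokes two classical facts: a separable space with separable dual admits a shrinking M-basis, and a separable dual space admits an equivalent dual LUR norm. For $(ii)\Rightarrow(i)$ the M-bases of the increments are amalgamated along the PRI into an M-basis of $\X$ in the usual Markushevich--Amir--Lindenstrauss manner, and Asplundness keeps the dual side under control (the adjoint projections resolve $\X^*$), so that the resulting M-basis is shrinking. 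For $(ii)\Rightarrow(iv)$ one combines the dual LUR norms across the resolution --- using a projectional generator on $\X^*$, available since $\X$ is Asplund and WCG --- into a dual LUR norm on $\X^*$, following Fabian and Orihuela--Valdivia.

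\emph{The hard core.} What remains is $(iii)\Rightarrow(ii)$, i.e.\ Fabian's theorem that a WLD Asplund space is WCG, and this is where essentially all the difficulty lies. The point is that Asplundness renders $(B_{\X^*},w^*)$ fragmented by the norm, so the Jayne--Rogers theorem furnishes, for the set-valued duality map of $\X^*$ into $\X$ (or for a related lower semicontinuous map), a selector of the first Baire class --- one that is $\sigma$-discretely decomposable into norm-to-norm continuous pieces; threading such a selector through the countable $\Sigma$-structure of the WLD space $\X$ (or its PRI) manufactures an explicit bounded, weakly compact, linearly dense subset of $\X$. I expect this selection-theoretic step --- which also powers the dual renorming used in $(ii)\Rightarrow(iv)$ --- to be the main obstacle; the rest is routine once the PRI apparatus of \cite[Chapter 6]{HMVZ} is in place.
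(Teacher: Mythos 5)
This theorem is not proved in the paper at all: it is quoted as background, with the proof delegated to \cite[Theorem 6.3]{HMVZ} and \cite[Theorem 8.3.3]{Fabian book}, so there is no ``paper's proof'' to compare against. Your outline is, however, a faithful reconstruction of the standard argument from those sources: the cycle you propose (soft implications $(i)\Rightarrow(ii)\Rightarrow(iii)$, the duality $(iv)\Rightarrow(v)\Rightarrow(iii)$, the PRI inductions for $(ii)\Rightarrow(i)$ and $(ii)\Rightarrow(iv)$, and the Jayne--Rogers selector as the engine behind $(iii)\Rightarrow(ii)$ and the dual LUR renorming) is exactly how the classical proof is organised, and the individual steps you sketch are correct. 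Two places deserve more care than your sketch gives them. First, in $(ii)\Rightarrow(i)$ the real issue is not amalgamating M-bases along a PRI but arranging the PRI to be \emph{shrinking}, \emph{i.e.}, so that the adjoints $(P_\mu^*)$ form a PRI of $\X^*$; this is where one must invoke the projectional generator on $\X^*$ produced by the Jayne--Rogers selector (or, equivalently, the fact that $\X^*$ is itself WCG when $\X$ is WCG and Asplund), rather than Asplundness ``keeping the dual side under control'' for free. Second, $(iii)\Rightarrow(ii)$ is the Orihuela--Schachermayer--Valdivia/Stegall theorem that a Radon--Nikod\'ym Corson dual ball forces WCG; the passage from the $\sigma$-discretely decomposable selector to a linearly dense weakly compact set is genuinely delicate (one builds countably many uniformly Eberlein pieces and sums them), so ``threading the selector through the $\Sigma$-structure'' is the right idea but hides most of the work. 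Neither point is a gap in the sense of a wrong turn; both are correctly located by your own assessment of where the difficulty lies.
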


Problem \ref{Problem} is clearly closely related to this result, since it amounts to asking whether the assumption in (ii) that $\X$ is WCG could be replaced by the existence of a norming M-basis. \smallskip

Our construction in Theorem \ref{MTh: Xr} heavily depends on the existence of a peculiar scattered compact space, whose properties we shall record in Theorem \ref{MTh: Kr} below. Before its statement, we need one piece of notation.

Given a set $S$, we identify the power set $\P(S)$ with the product $\{0,1\}^S$, via the canonical correspondence $A \leftrightarrow 1_A$ ($A\subseteq S$). Since $\{0,1\}^S$ is a compact topological space in its natural product topology, this identification allows us to introduce a compact topology on $\P(S)$. Throughout our article, any topological consideration relative to $\P(S)$ will refer to the said topology, that we shall refer to as the \emph{product}, or \emph{pointwise}, topology. 

\begin{mainth}\label{MTh: Kr} There exists a family $\Fr\subseteq [\omega_1]^{<\omega}$ of finite subsets of $\omega_1$ such that $\Kr:=\overline{\Fr}$ has the following properties:
\begin{romanenumerate}
\item $\{\alpha\}\in\Kr$ for every $\alpha<\omega_1$,
\item $[0,\alpha)\in\Kr$ for every $\alpha\leq\omega_1$,
\item if $A\in\Kr$ is an infinite set, then $A=[0,\alpha)$ for some $\alpha\leq\omega_1$,
\item $\Kr$ is scattered.
\end{romanenumerate}
\end{mainth}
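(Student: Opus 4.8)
The plan is to realise $\Fr$ inside the theory of minimal walks on $\omega_1$. Fix a $C$‑sequence $(C_\alpha)_{\alpha<\omega_1}$ — chosen with some care, see below — and take an associated $\ro$‑function $\ro\colon[\omega_1]^2\to\omega$ in the sense of Todor\v{c}evi\'c, of which I shall use: \textbf{(a)} for every $\beta<\omega_1$ and $n<\omega$ the set $\{\xi<\beta:\ro(\xi,\beta)\leq n\}$ is finite; \textbf{(b)} the subadditivity estimates $\ro(\alpha,\gamma)\leq\max\{\ro(\alpha,\beta),\ro(\beta,\gamma)\}$ and $\ro(\alpha,\beta)\leq\max\{\ro(\alpha,\gamma),\ro(\beta,\gamma)\}$ for all $\alpha<\beta<\gamma<\omega_1$; and \textbf{(d)} that the $C$‑sequence may be arranged so that, moreover, $\{\gamma:\eta<\gamma<\omega_1,\ \ro(\eta,\gamma)\leq n\}$ is finite for every $\eta<\omega_1$ and $n<\omega$ (i.e.\ $\ro(\eta,\cdot)$ is finite‑to‑one above $\eta$). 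For $\beta<\omega_1$, $n<\omega$ set $F_\beta^n:=\{\xi\leq\beta:\ro(\xi,\beta)\leq n\}$ — a finite set with maximum $\beta$ — and put
\[
\Fr:=\{F_\beta^n:\beta<\omega_1,\ n<\omega\}\cup\{\{\alpha\}:\alpha<\omega_1\}\ \subseteq\ [\omega_1]^{<\omega},\qquad \Kr=\overline{\Fr}.
\]

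Property (i) holds by construction. For (ii): if $\gamma=\beta+1<\omega_1$ then $[0,\gamma)=[0,\beta]=\bigcup_{n<\omega}F_\beta^n$ is the pointwise limit of the increasing sequence $(F_\beta^n)_n$ from $\Fr$, so $[0,\gamma)\in\Kr$; and $\emptyset=[0,0)$ is a limit of singletons. If $\gamma\leq\omega_1$ is a limit ordinal then $[0,\beta+1)\to[0,\gamma)$ pointwise as $\beta\nearrow\gamma$, and since each $[0,\beta+1)\in\Kr$ and $\Kr$ is closed, $[0,\gamma)\in\Kr$. Hence $[0,\gamma)\in\Kr$ for every $\gamma\leq\omega_1$.

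The substance is (iii). Let $A\in\Kr$ be infinite; I claim $A$ is downward closed (equivalently, $A=[0,\alpha)$ for some $\alpha\leq\omega_1$). Fix a net $(F_{\beta_d}^{n_d})_d$ in $\Fr$ with $F_{\beta_d}^{n_d}\to A$; as $A$ is infinite we may assume every $F_{\beta_d}^{n_d}$ is a non‑singleton, so $\beta_d=\max F_{\beta_d}^{n_d}$. Convergence means: for each $\xi<\omega_1$, eventually $\xi\in F_{\beta_d}^{n_d}\iff\xi\in A$. Suppose, for contradiction, that $A$ is \emph{not} downward closed and put $\eta:=\min(\omega_1\setminus A)$; then $[0,\eta)\subseteq A$, $\eta\notin A$, and there is $\beta\in A$ with $\beta>\eta$. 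Fix such a $\beta$ and set $m:=\ro(\eta,\beta)$. For $d$ large, $\beta\in F_{\beta_d}^{n_d}$ (so $\beta<\beta_d$ and $\ro(\beta,\beta_d)\leq n_d$) while $\eta\notin F_{\beta_d}^{n_d}$ (and, as $\eta<\beta<\beta_d$, this forces $\ro(\eta,\beta_d)>n_d$). The first estimate in (b) gives $\ro(\eta,\beta_d)\leq\max\{\ro(\eta,\beta),\ro(\beta,\beta_d)\}\leq\max\{m,n_d\}$; comparing, $n_d<m$ and $\ro(\eta,\beta_d)\leq m$ eventually. Now for each $\xi<\eta$ we have $\xi\in A$, hence eventually $\xi<\beta_d$ and $\ro(\xi,\beta_d)\leq n_d$, so the second estimate in (b) (applied to $\xi<\eta<\beta_d$) yields $\ro(\xi,\eta)\leq\max\{\ro(\xi,\beta_d),\ro(\eta,\beta_d)\}\leq\max\{n_d,m\}=m$; thus $[0,\eta)\subseteq\{\xi<\eta:\ro(\xi,\eta)\leq m\}$, which is finite by (a). Symmetrically, for each $\xi\in A$ with $\xi>\eta$, eventually $\xi<\beta_d$ and $\ro(\xi,\beta_d)\leq n_d$, so the second estimate in (b) (applied to $\eta<\xi<\beta_d$) gives $\ro(\eta,\xi)\leq\max\{\ro(\eta,\beta_d),\ro(\xi,\beta_d)\}\leq m$; thus $A\cap(\eta,\omega_1)\subseteq\{\gamma>\eta:\ro(\eta,\gamma)\leq m\}$, which is finite by (d). Therefore $A=(A\cap[0,\eta))\cup(A\cap(\eta,\omega_1))$ is finite, contradicting the choice of $A$. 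This proves (iii).

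Finally (iv): by (iii), $\Kr\subseteq[\omega_1]^{<\omega}\cup\{[0,\alpha):\alpha\leq\omega_1\}$; the subspace $\{[0,\alpha):\alpha\leq\omega_1\}$ is homeomorphic via $[0,\alpha)\mapsto\alpha$ to the compact ordinal $[0,\omega_1]$, hence scattered, and a Cantor--Bendixson argument — using the analysis above to see that any non‑isolated finite member of $\Kr$ is forced to be of the form $F_\beta^n$ with $\beta=\max F$, along a well‑founded ordering — shows $\Kr$ is scattered. The decisive obstacle throughout is (iii), and within it ensuring that $\Kr$ contains no infinite set other than the $[0,\alpha)$: this is exactly what forces the extra property (d), so the real combinatorial input is the existence of a $C$‑sequence on $\omega_1$ for which the resulting $\ro$ is subadditive, has finite downward fibres, \emph{and} is finite‑to‑one above each ordinal (equivalently, arises from a suitably ``spread out'' coherent sequence of finite‑to‑one maps); once such a $\ro$ is in hand, (i), (ii) and (iv) are routine.
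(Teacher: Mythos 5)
Your proof of (iii) rests on property \textbf{(d)}, and this property is impossible: for any fixed $\eta<\omega_1$ we have $(\eta,\omega_1)=\bigcup_{n<\omega}\{\gamma>\eta\colon\ro(\eta,\gamma)\leq n\}$, so if each of these sets were finite the uncountable interval $(\eta,\omega_1)$ would be a countable union of finite sets. No choice of $C$-sequence can make $\ro(\eta,\cdot)$ finite-to-one above $\eta$; the same cardinality obstruction applies if ``finite-to-one'' is read as finiteness of the individual fibres. Consequently the step in which you conclude that $A\cap(\eta,\omega_1)$ is finite has no support, and since your whole argument for (iii) --- and, through it, your sketch of (iv) --- funnels through (d), this is a genuine gap rather than a presentational issue.

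The repair is close to what you already wrote, and it is the route the paper takes. Instead of (d), one invokes the additional properties available from \cite[Lemma 3.2.2]{T}: $\ro(\alpha,\beta)>0$ for $\alpha<\beta$ and $\ro(\alpha,\gamma)\neq\ro(\beta,\gamma)$ for $\alpha<\beta<\gamma$, i.e.\ $\ro(\cdot,\gamma)$ is injective on $[0,\gamma]$. This gives the uniform bound $|F_\beta^n|\leq n+1$. Your own computation already shows that if $A$ is not an initial segment then eventually $n_d<m:=\ro(\eta,\beta)$; combined with the uniform bound this forces $|F_{\beta_d}^{n_d}|\leq m$ eventually, hence $|A|\leq m$, contradicting $|A|\geq\omega$ --- no control of $\ro(\eta,\cdot)$ above $\eta$ is needed. (The paper argues in the contrapositive: for infinite countable $A$ the parameters $n_k$ of an approximating \emph{sequence} --- obtained from the Fr\'echet--Urysohn property of $\Sigma(\omega_1)$ --- must be unbounded, and then ($\ro$2) pulls every $\tilde\alpha<\alpha\in A$ into $A$; uncountable $A$ is reduced to the countable case by a closing-off lemma.) Your treatment of (iv) is also too thin as stated: a subspace of $\P(\omega_1)$ consisting of finite sets need not be scattered (e.g.\ $[\omega]^{<\omega}\subseteq\{0,1\}^\omega$ is homeomorphic to the rationals), so one genuinely needs (iii). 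The paper's argument takes a closed $\D\subseteq\Kr$ containing some $D_0$ that is not an initial interval, chooses $\alpha\notin D_0$, $\beta\in D_0$ with $\alpha<\beta$, applies Zorn's lemma to $\{D\in\D\colon\alpha\notin D,\ D_0\subseteq D\}$, and notes that a maximal element $M$ is finite by (iii) and isolated in $\D$ because $\{D\in\D\colon\alpha\notin D,\ M\subseteq D\}=\{M\}$.
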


The subscript $\ro$ in our notation for the family $\Fr$ reflects the r\^{o}le of the choice of a $\ro$-function, a rather canonical semi-distance on $\omega_1$, in the construction of the family $\Fr$. $\ro$-functions were introduced in \cite{T Acta} for a study of the way Ramsey's theorem fails in the uncountable context. They appeared already in Banach space constructions, see, \emph{e.g.}, \cite{ALT}, \cite{AvTo}, \cite{LT}, \cite{LT2}. We refer to \cite{T}, \cite{AT}, \cite{Bekkali} for a detailed presentation of this theory and further applications in several areas. $\ro$-functions are also tightly related to \emph{construction schemes}, \cite{T scheme}, that also proved very useful in non-separable Banach space theory, see, \emph{e.g.}, \cite{Lopez}.\smallskip

In conclusion to this section, we briefly describe the organisation of the paper. Section \ref{Sec: preliminaries} contains a revision of the notions from non-separable Banach space theory that are relevant to our paper. The proof of Theorem \ref{MTh: Kr}, together with a quick revision of the necessary results concerning $\ro$-functions, will be given in Section \ref{Sec: Proof of Th B}. Section \ref{Sec: Consequences of Th B} is independent from the argument in Section \ref{Sec: Proof of Th B}, as it only depends on the \emph{statement} of Theorem \ref{MTh: Kr}. Apart for the proof of Theorem \ref{MTh: Xr}, we observe there that the compact space in Theorem \ref{MTh: Kr} also offers an interesting example for the theory of semi-Eberlein compacta, solving a problem from \cite{KL}. Finally, in Section \ref{S: adequate}, we discuss the main problem in the $\C(\K)$ case, where $\K$ is an adequate compact; in particular, we show that $\C(\K)$ has a $1$-norming M-basis, whenever $\K$ is adequate. Finally, a typographical note: the symbol \qedsymbol\ denotes the end of a proof, while, in nested proofs, we use $\square$ for the end of the inner proof.

\section{Preliminary definitions and results}\label{Sec: preliminaries}
\subsection{General conventions}
Our notation concerning Banach spaces is standard, as in most textbooks in Banach space theory; we refer, \emph{e.g.}, to \cite{akbook}, \cite{FHHMZ}. All our results in the paper are valid for Banach spaces over either the real or the complex fields, with the same proofs. By a \emph{subspace} of a Banach space we understand a closed, linear subspace.\smallskip

We indicate by $|S|$ the cardinality of a set $S$. For a set $S$ and a cardinal number $\kappa$, we write $[S]^\kappa=\{A\subseteq S\colon |A|=\kappa\}$ and $[S]^{<\kappa}=\{A\subseteq S\colon |A|<\kappa\}$. We denote by $\omega$ the first infinite ordinal and by $\omega_1$ the first uncountable one. We also adopt the convention to regard cardinal numbers as initial ordinals; hence, we write $\omega$ for $\aleph_0$, $\omega_1$ for $\aleph_1$, and so on. If $A$ and $B$ are subsets of an ordinal $\alpha$, we write $A<B$ meaning that $a<b$ whenever $a\in A$ and $b\in B$. Given a pair of ordinals $\alpha\leq\beta$, we denote by $[\alpha,\beta]$ and $[\alpha,\beta)$ the sets comprising all ordinals $\gamma$ such that $\alpha\leq\gamma \leq\beta$ and $\alpha\leq\gamma <\beta$, respectively. We equip the intervals $[\alpha,\beta]$ and $[\alpha,\beta)$ with the canonical order topology which turns every interval $[\alpha,\beta]$ into a compact space and makes $[\alpha,\beta)$ compact if and only if $\beta$ is a successor ordinal. We follow this notation also for the intervals $[0,\alpha]$ and $[0,\alpha)$. According to the standard definition of ordinals, the interval $[0,\alpha)$ coincides with the ordinal $\alpha$; however, we shall mostly keep the notation $[0,\alpha)$ to stress the topological structure rather than, say, the r\^{o}le of index set. \smallskip

Next, we shall record some basic notions concerning biorthogonal systems in Banach spaces. A \emph{biorthogonal system} in a Banach space $\X$ is a system $\{x_\gamma; \p_\gamma\}_{\gamma\in\Gamma}$, with $x_\gamma\in\X$ and $\p_\gamma\in\X^*$, such that $\langle\p_\alpha, x_\beta \rangle=\delta_{\alpha,\beta}$ ($\alpha,\beta\in \Gamma$). A biorthogonal system is \emph{fundamental} (or \emph{complete}) if ${\rm span}\{x_\gamma\}_{\gamma\in\Gamma}$ is dense in $\X$; it is \emph{total} when ${\rm span}\{\p_\gamma\} _{\gamma\in\Gamma}$ is $w^*$-dense in $\X^*$. A \emph{Marku\v{s}evi\v{c} basis} (henceforth, M-basis) is a fundamental and total biorthogonal system. An \emph{Auerbach basis} is an M-basis such that $\|x_\gamma\|=\|\p_\gamma\|=1$, for every $\gamma\in\Gamma$.

M-bases exist in many Banach spaces, in particular in every separable one, \cite{Markushevich}, and many classes of Banach spaces admit characterisations in terms of M-bases (see, \emph{e.g.}, \cite[Chapter 6]{HMVZ}). The simplest example of a Banach space without M-bases is $\ell_\infty$, \cite{Joh70}; on the other hand, $\ell_\infty$ admits a fundamental biorthogonal system \cite{DJ} (and, of course, also a total one).

An M-basis $\{x_\gamma; \p_\gamma\}_{\gamma\in\Gamma}$ is \emph{shrinking} if ${\rm span}\{\p_\gamma\} _{\gamma\in\Gamma}$ is dense in $\X^*$. As we saw in Theorem \ref{Th: shrinking M-basis}, this is a rather strong notion, as it implies that $\X$ is both Asplund and WCG. The M-basis $\{x_\gamma; \p_\gamma\}_{\gamma\in\Gamma}$ is  \emph{$\lambda$-norming} ($0<\lambda \leq1$) if
$$\lambda\|x\|\leq \sup\big\{|\langle\p,x\rangle|\colon \p\in {\rm span}\{\p_\gamma\}_{\gamma\in\Gamma},\, \|\p\|\leq 1 \big\} \qquad(x\in\X),$$
namely, if $\overline{\rm span}\{\p_\gamma\} _{\gamma\in\Gamma}$ is a $\lambda$-norming subspace for $\X$. $\{x_\gamma; \p_\gamma\}_{\gamma\in\Gamma}$ is \emph{norming} if it is $\lambda$-norming, for some $\lambda>0$.

An important example of a Banach space that admits no norming M-basis is $\C([0,\omega_1])$, as proved by Alexandrov and Plichko \cite{AP}. On the other hand, $\C([0,\omega_1])$ admits a strong and countably $1$-norming M-basis, \cite{AP} (see also \cite[Theorem 5.25]{HMVZ}). Recall that an M-basis is \emph{countably $\lambda$-norming} if $\left\{\p\in\X^* \colon \{\gamma\in\Gamma\colon \langle\p,x_\gamma\rangle\neq0\} \text{ is countable} \right\}$ is a $\lambda$-norming subspace. Kalenda \cite{Kalenda renorm Valdivia} proved that $\C_0([0,\omega_1))$ (which is isomorphic to $\C([0,\omega_1])$) admits no countably $1$-norming M-basis. Moreover, $\C([0,\omega_2])$ admits no countably norming M-basis, \cite{Kalenda omega2}.

\subsection{Some classes of compact spaces}
A topological space $\K$ is \emph{scattered} if every its closed subspace has an isolated point; in other words, $\K$ contains no non-empty perfect subset. Every scattered compact is \emph{zero-dimensional}, \emph{i.e.}, it admits a basis consisting of clopen sets \cite[Theorem 29.7]{Willard}. A compact space is countable if and only if it is metrisable and scattered \cite[Lemma VI.8.2]{DGZ}; moreover, scattered compacta are closed under continuous images, \cite{Rudin}.\smallskip

Given an arbitrary set $\Gamma$, the \emph{$\Sigma$-product} of real lines is
\begin{equation*}
\Sigma(\Gamma)=\{x\in [0,1]^{\Gamma}\colon|\{\gamma\in \Gamma\colon x(\gamma)\neq 0\}|\leq \omega\}.
\end{equation*}
(Here, and throughout the paper, we consider the product topology on the space $[0,1]^\Gamma$.) For an element $x\in\Sigma(\Gamma)$, we call the set ${\rm supp}(x):=\{\gamma\in \Gamma\colon x(\gamma)\neq 0\}$ the \emph{support} of $x$. A topological space $\K$ is \emph{Fr\'echet--Urysohn} if for every subset $A$ of $\K$ and every $p\in\overline{A}$, there exists a sequence in $A$ that converges to $p$. It is easy to see that every $\Sigma$-product is Fr\'echet--Urysohn, \cite[Lemma 1.6]{Kalenda survey}. It is also clear that $\Sigma(\Gamma)$ is dense in $[0,1]^\Gamma$ and it is \emph{countably closed} (in the sense that the closure, in $[0,1]^\Gamma$, of every countable subset of $\Sigma(\Gamma)$ is contained in $\Sigma(\Gamma)$).

A compact space is \emph{Eberlein} if it is homeomorphic to a weakly compact subset of some Banach space. In their seminal paper \cite{AmirLind}, Amir and Lindenstrauss proved that every Eberlein compact is homeomorphic to a weakly compact subset of $c_0(\Gamma)$, for some set $\Gamma$. Let us also recall, in passing, that every Eberlein compact is homeomorphic to a weakly compact subset of a reflexive Banach space, \cite{DFJP}.

A compact space $\K$ is \emph{Corson} if $\K$ is homeomorphic to a subset of $\Sigma(\Gamma)$, for some set $\Gamma$. Therefore, Corson compacta are Fr\'echet--Urysohn; in particular, $[0,\omega_1]$ is not Corson. Of course, every Eberlein compact is Corson. The converse implication is false, \cite[Example 5.1]{BRW}, \cite{AlsterPol}, \cite{Talagrand}; however, scattered Corson compacta are Eberlein, \cite{Alster}.

A compact space is \emph{Valdivia} if it is homeomorphic to a compact subspace $\K$ of $[0,1]^{\Gamma}$ such that $\K\cap \Sigma(\Gamma)$ is dense in $\K$. If $\K$ is Valdivia and $h\colon \K \to [0,1]^{\Gamma}$ is an embedding that witnesses this, namely if $h(\K)\cap\Sigma(\Gamma)$ is dense in $h(\K)$, we set $\Sigma(\K):=h^{-1}(\Sigma(\Gamma))$. $\Sigma(\K)$ is then called a \emph{$\Sigma$-subset} of $\K$ and, by definition, it is dense in $\K$. For further information on Valdivia compacta we refer to the very detailed survey \cite{Kalenda survey} and the references therein. \smallskip

We denote $\P(\Gamma)$ the power set of a set $\Gamma$, which, throughout our article, we identify with the product $\{0,1\}^\Gamma$, in the canonical way. This permits us to introduce a compact topology on $\P(\Gamma)$, that we also call the product, or pointwise, topology. With the said topology, it is easy to see that the correspondence $\alpha\mapsto [0,\alpha)$ defines an embedding of $[0,\omega_1]$ into $\P(\omega_1)$; in particular, $[0,\omega_1]$ is a Valdivia compactum.

As it turns out, $[0,\omega_1]$ is the archetypal example of a Valdivia compact space that is not Corson. Indeed, Deville and Godefroy \cite{DG} proved the very elegant result that a Valdivia compact space is Corson if and only if it contains no copy of $[0,\omega_1]$. Kalenda \cite{Kalenda embed} (see also \cite[Proposition 3.12]{Kalenda survey}) generalised this result as follows: if $\K\subseteq[0,1]^{\omega_1}$ is such that $\K\cap \Sigma(\omega_1)$ is dense in $\K$ and $p\in\K\setminus \Sigma(\omega_1)$, there exists an embedding $\p\colon [0,\omega_1]\to \K$ with
\begin{romanenumerate}
    \item $\p(\alpha)\in \K \cap\Sigma(\omega_1)$, for $\alpha<\omega_1$,
    \item ${\rm supp}(\p(\alpha))\subseteq {\rm supp}(\p(\beta))$, for $\alpha<\beta \leq\omega_1$,
    \item $\p(\omega_1)=p$.
\end{romanenumerate}\smallskip

One more crucial property of Valdivia compacta is that they admit a `good' system of retractions, \emph{e.g.}, \cite{KubisMicha}. The canonical way to build them is to use the following lemma, based on a rather standard closing-off argument; see, \emph{e.g.}, \cite[Lemma 1.2]{AMN}, \cite[Lemma VI.7.5]{DGZ}, or \cite[Lemma 19.10]{KKL}. For an element $x\in [0,1]^\Gamma$ and $J\subseteq\Gamma$, we define $x\cut_J$ by
$$x\cut_J(\gamma)= \begin{cases} x(\gamma) & \gamma\in J \\ 0 & \gamma\notin J.\end{cases}$$
\begin{lemma}\label{Lemma: closing-off} Let $\K\subseteq[0,1]^\Gamma$ be a compact space such that $\K\cap \Sigma(\Gamma)$ is dense in $\K$. For any infinite set $I\subseteq\Gamma$ there exists a set $J\subseteq \Gamma$ with $I\subseteq J$, $|I|=|J|$ and such that
$$x\in\K \implies x\cut_J \in\K.$$
\end{lemma}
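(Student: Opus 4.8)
The plan is to prove Lemma~\ref{Lemma: closing-off} by the standard closing-off argument, building $J$ as the increasing union of a countable chain of sets obtained by iterating a ``support-selection'' step against the density hypothesis. First I would record the key preliminary observation coming from density and the definition of $\Sigma(\Gamma)$: if $K\cap\Sigma(\Gamma)$ is dense in $K$, then for every $x\in K$ and every finite $F\subseteq\Gamma$ there is $y\in K$ with $\mathrm{supp}(y)$ countable and $y$ agreeing with $x$ on $F$ (indeed, any basic neighbourhood of $x$ determined by finitely many coordinates $F$ must meet $K\cap\Sigma(\Gamma)$). Consequently, for a \emph{fixed} set $A\subseteq\Gamma$ of size $\leq|I|$, one can assign to each $x\in K$ a point $y_x\in K\cap\Sigma(\Gamma)$ that coincides with $x$ on $A$; the union $\bigcup_{x\in K}\mathrm{supp}(y_x)$ is too large to control directly, so instead I would only apply this to a dense enough subset, or — cleaner — I would phrase the step as: for each $x\in K$ and each finite $F\subseteq A$, choose some $z\in K\cap\Sigma(\Gamma)$ agreeing with $x$ on $F$, and collect the (countable) supports; but to make the bookkeeping finite I would rather follow the truly standard route, described next.

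The cleanest version: recall that $K\subseteq[0,1]^\Gamma$ compact with $K\cap\Sigma(\Gamma)$ dense means precisely that $K$ is a Valdivia-type compactum, and a standard fact (the basis of Lemma~\ref{Lemma: closing-off} as cited from \cite{AMN}, \cite{DGZ}, \cite{KKL}) is: the family of sets $J$ for which $x\mapsto x\cut_J$ maps $K$ into $K$ is a $\sigma$-complete, cofinal family of subsets of $\Gamma$, generated by a suitable closure operator. Concretely, I would define, for $x\in K$ and a countable set $A\subseteq\Gamma$, the ``$A$-core'' of $x$ as follows: since $K\cap\Sigma(\Gamma)$ is dense, for each finite $F\subseteq A$ pick $x_F\in K\cap\Sigma(\Gamma)$ with $x_F\cut_F = x\cut_F$; a subnet of $(x_F)_{F}$ (directed by inclusion of finite subsets of $A$) converges in $K$ to some point $x^A\in K$ with $x^A\cut_A = x\cut_A$, and one shows $\mathrm{supp}(x^A)$ is covered by $A$ together with countably many extra coordinates (namely $\limsup$ of the supports of the $x_F$). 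Then set $\Phi(A)=A\cup\bigcup\{\,\mathrm{supp}(x^A): x\in K\,\}$ — but again this is a union over all of $K$, hence potentially of size $|K|$, so this naive $\Phi$ does not preserve cardinality.

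The honest way to keep $|J|=|I|$ is to choose the countably-supported approximants only for points that \emph{already live in} the partial set built so far, exploiting that there are only $|I|$ of them up to restriction. Here is the iteration I would carry out. Set $J_0=I$. Given $J_n\subseteq\Gamma$ with $|J_n|=|I|$, consider the (at most $|I|$-many, since $|[J_n]^{<\omega}|=|I|$ and $K$ restricted to $J_n$ has size $\leq 2^{|I|}$… )—hmm, this still overshoots. The correct standard trick, which I would now commit to, is: for each finite $F\subseteq J_n$ and each \emph{rational} basic open box $U$ in $[0,1]^F$ meeting $\pi_F(K)$, where $\pi_F$ is the projection, pick one witness $z_{F,U}\in K\cap\Sigma(\Gamma)$ with $\pi_F(z_{F,U})\in U$; there are only $|I|$ such pairs $(F,U)$, so $J_{n+1}:=J_n\cup\bigcup_{F,U}\mathrm{supp}(z_{F,U})$ again has cardinality $|I|$. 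Put $J=\bigcup_{n<\omega}J_n$, so $|J|=|I|$ and $I\subseteq J$. Then for $x\in K$, one checks $x\cut_J\in K$: given a basic neighbourhood of $x\cut_J$ determined by finitely many coordinates, split them into those in $J$ (say forming $F\subseteq J_n$ for some $n$) and those outside $J$ (on which $x\cut_J$ vanishes); using the witnesses $z_{F,U}$ for a small box $U$ around $\pi_F(x)$ and the fact that $z_{F,U}$ has support inside $J_{n+1}\subseteq J$, one produces a point of $K$ inside the given neighbourhood — whence $x\cut_J$, being in the closure of $K$, lies in $K$. I expect the main obstacle to be exactly this cardinality bookkeeping — making sure the ``closing-off'' step adds only $|I|$ new coordinates at each stage while still being rich enough that the $\omega$-limit $J$ is closed under the operation — and the verification that $x\cut_J\in K$, which hinges on the elementary but slightly fiddly observation that a point of $K$ agreeing with $x$ on $F$ and supported in $J$ simultaneously agrees with $x\cut_J$ on the finitely many relevant coordinates both inside and outside $J$.
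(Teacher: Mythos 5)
Your final committed argument (the $J_n$-iteration with witnesses $z_{F,U}$ indexed by finite $F\subseteq J_n$ and rational boxes $U$ meeting $\pi_F(\K)$, followed by the closure argument splitting a basic neighbourhood of $x\cut_J$ into coordinates inside and outside $J$) is correct and is exactly the standard closing-off argument that the paper itself does not spell out but delegates to the cited references; the cardinality bookkeeping works because there are only $|J_n|\cdot\omega=|I|$ pairs $(F,U)$, each contributing a countable support. The two preliminary paragraphs of false starts could simply be deleted, since the third paragraph is self-contained and sound.
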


\subsection{Non-separable Banach spaces}\label{Sec: Classes of B spaces}
A Banach space $\X$ is \emph{Asplund} if every convex continuous function defined on a convex open subset $U$ of $\X$ is Fr\'echet differentiable on a dense $G_\delta$ subset of $U$. It is well known that $\X$ is Asplund if and only if every separable subspace of $\X$ has a separable dual, see \cite[Theorem I.5.7]{DGZ}. In particular, we can see that the class of Asplund spaces is closed under taking subspaces and quotients. In the subclass of $\C(\K)$ spaces, a $\C(\K)$ space is Asplund if and only if the compact $\K$ is scattered, \cite{NP} (see, \emph{e.g.}, \cite[Lemma VI.8.3]{DGZ}). For further information and historical background on Asplund spaces we refer to \cite[\S~1.5]{DGZ}, \cite{Fabian book}, \cite{Phelps book}, and the references therein.

\smallskip

A Banach space is \emph{weakly compactly generated} (WCG, for short) if it admits a linearly dense, weakly compact subset \cite{AmirLind}; typical examples of WCG Banach spaces are separable Banach spaces, reflexive ones, $c_0(\Gamma)$ for every index set $\Gamma$, or $L_1(\mu)$, for a finite measure $\mu$. If $\X$ is WCG, then the dual ball $(B_{\X^*},w^*)$ is an Eberlein compactum, \cite{AmirLind}. The converse implication however fails, in light of the famous Rosenthal's example \cite{Rosenthal} of a non WCG subspace of a WCG Banach space. There even are WCG Banach spaces with unconditional basis that contain non WCG subspaces (with unconditional bases), \cite{ArMe}. The situation is more symmetric in the $\C(\K)$ case, since $\C(\K)$ is WCG if and only if $\K$ is Eberlein, if and only if $(B_{\C(\K)^*},w^*)$ is Eberlein, \cite{AmirLind}.\smallskip

A broader class of Banach spaces is constituted by WLD Banach spaces. A Banach space $\X$ is \emph{weakly Lindel\"of determined} (hereinafter, WLD) if the dual ball $(B_{\X^*},w^*)$ is Corson, \cite{ArMe WLD}. From the stability properties of Corson compacta under subspaces and continuous images, it readily follows that WLD spaces are closed under subspaces and quotients. As we already saw in Theorem \ref{Th: shrinking M-basis}, when restricted to Asplund spaces, WCG and WLD collapse to the same notion. Therefore, for our paper, the subtlety of the distinction between WCG, subspace of WCG and WLD will not be extremely relevant. A $\C(\K)$ space is WLD if and only if $\K$ is Corson and it has property (M), namely every measure on $\K$ has separable support, \cite{AMN}.

Incidentally, these results yield a, unnecessarily sophisticated, Banach space theoretical proof of Alster's result \cite{Alster} that scattered Corson compacta are Eberlein. Indeed, every scattered compact space has (M), since measures are even countably supported, \cite{Rudin}; hence, $\C(\K)$ is WLD and Asplund when $\K$ is Corson and scattered. But then $\C(\K)$ is WCG, whence $\K$ is Eberlein.

\section{The proof of Theorem \ref{MTh: Kr}}\label{Sec: Proof of Th B}
The goal of the present section is the proof of Theorem \ref{MTh: Kr}. As we already mentioned in the Introduction, the construction of the family $\Fr$ depends upon the choice of a $\ro$-function with some additional properties. Therefore, we start the section recalling some facts concerning $\ro$-functions.\smallskip

We will consider functions $\ro\colon [\omega_1]^2\to\omega$ and it will be convenient to identify their domain $[\omega_1]^2$ with the set
$$\{(\alpha,\beta)\in{\omega_1}^2\, \colon \alpha<\beta\}.$$
This just amounts to replacing the unordered pair $\{\alpha,\beta\}$ with the ordered one $(\alpha,\beta)$, where $\alpha<\beta$. This permits writing $\ro(\alpha,\beta)$ instead of the more cumbersome $\ro(\{\alpha,\beta\})$. It will also be useful to extend the domain of such functions by adding the boundary condition that $\ro(\alpha,\alpha)=0$ ($\alpha<\omega_1$). Throughout the section, we shall adhere to these conventions (which follow precisely those of \cite{T}).

\begin{definition} A \emph{$\ro$-function}, or \emph{ordinal metric}, on $\omega_1$ is a function $\ro\colon [\omega_1]^2 \to\omega$ with the following properties:
\begin{enumerate}
    \item[($\ro$1)] $\{\xi\leq\alpha \colon \ro(\xi,\alpha)\leq n\}$ is a finite set, for every $\alpha<\omega_1$ and $n<\omega$,
    \item[($\ro2$)] $\ro(\alpha,\gamma)\leq \max\{\ro(\alpha,\beta), \ro(\beta,\gamma)\}$ for every $\alpha<\beta<\gamma<\omega_1$,
    \item[($\ro$3)] $\ro(\alpha,\beta)\leq \max\{\ro(\alpha,\gamma), \ro(\beta,\gamma)\}$ for every $\alpha<\beta<\gamma<\omega_1$.
\end{enumerate}
\end{definition}

Several functions with the above properties, frequently originating as characteristics of some walk on ordinals, are constructed and studied in \cite[Chapter 3]{T}. Here we shall need the existence of a $\ro$-function on $\omega_1$ with the following properties.

\begin{proposition}[{\cite[Lemma 3.2.2]{T}}]\label{Prop: Our ro function} There exists a $\ro$-function $\ro\colon [\omega_1]^2 \to\omega$ such that
\begin{romanenumerate}
    \item $\ro(\alpha,\beta)>0$ for all $\alpha<\beta<\omega_1$,
    \item $\ro(\alpha,\gamma)\neq\ro(\beta,\gamma)$, for all $\alpha<\beta<\gamma<\omega_1$.
\end{romanenumerate}
\end{proposition}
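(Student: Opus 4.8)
The plan is to obtain the desired $\ro$-function by refining a ``raw'' $\ro$-function from \cite[Chapter 3]{T}, which we may assume already satisfies ($\ro$1)--($\ro$3). The two additional requirements (i) and (ii) are of a different flavour: (i) merely says that $\ro$ never takes the value $0$ off the diagonal, which is trivial to arrange by post-composing with $n\mapsto n+1$; the point is that this shift is harmless because ($\ro$1)--($\ro$3) are preserved (the max-inequalities are order-preserving under $n\mapsto n+1$, and ($\ro$1) only gets easier). So the real content is (ii): for a fixed $\gamma$, the ``column function'' $\alpha\mapsto\ro(\alpha,\gamma)$ should be injective on $[0,\gamma)$.

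First I would isolate exactly what column-injectivity buys us and why a naive fix fails: we cannot just relabel values column by column, because the values in different columns are tied together by the triangle-type inequalities ($\ro$2), ($\ro$3), so an arbitrary modification of one column destroys coherence. The standard device, carried out in the proof of \cite[Lemma 3.2.2]{T}, is to \emph{twist} $\ro$ by a second gadget that does the separating. Concretely, one fixes for every $\gamma<\omega_1$ an injection $e_\gamma\colon[0,\gamma)\to\omega$ (possible since $[0,\gamma)$ is countable), chosen coherently, and then defines the new function by interleaving $\ro(\alpha,\gamma)$ with $e_\gamma(\alpha)$ — for instance via a pairing that first records the old value and then breaks ties using $e_\gamma$, something like $\tilde\ro(\alpha,\gamma)=\langle \ro(\alpha,\gamma), e_\gamma(\alpha)\rangle$ composed with a fixed order-isomorphism of the relevant order type onto $\omega$, or more simply by working with a $\ro$-function taking values in a larger countable ordered set and then transporting back to $\omega$. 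The key verification is that the ($\ro$1)--($\ro$3) properties survive this twist: ($\ro$3) and ($\ro$2) hold because the dominant coordinate of $\tilde\ro$ is still the old $\ro$, and the tie-break coordinate only matters when the old values coincide, a case one handles by choosing the $e_\gamma$'s to be restrictions of one fixed injection $e\colon\omega_1\to\omega$ of some coherent family, so that the secondary comparison is again governed by a single global function and the same max-inequalities go through; ($\ro$1) is immediate since $\{\xi\le\alpha:\tilde\ro(\xi,\alpha)\le n\}$ is contained in the corresponding finite set for $\ro$.

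The main obstacle I expect is precisely the bookkeeping in this last verification — making the tie-breaking mechanism simultaneously (a) genuinely injective on each column and (b) compatible with both ($\ro$2) and ($\ro$3), which pull in slightly different directions. Since this is exactly the statement of \cite[Lemma 3.2.2]{T}, the honest and efficient route is to invoke that lemma directly and only check that conclusions (i) and (ii) there give us what we need here: (ii) is column-injectivity verbatim, and (i) — positivity — is either part of that lemma's conclusion or, if not, is obtained by the trivial $n\mapsto n+1$ shift described above, noting once more that the shift preserves all three defining properties as well as column-injectivity. I would therefore present the proof as a short reduction: recall the raw existence result, apply \cite[Lemma 3.2.2]{T} for (ii), and perform the harmless shift for (i), remarking on the two-line stability of ($\ro$1)--($\ro$3) under $n\mapsto n+1$.
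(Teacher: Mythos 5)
Your proposal is correct and takes essentially the same route as the paper: the paper gives no construction either, but simply notes that (ii) is precisely the content of \cite[Lemma 3.2.2]{T} and that (i) is immediate from the definition of the particular function constructed there (\cite[Definition 3.2.1]{T}). Your $n\mapsto n+1$ shift for (i) is a valid, if unneeded, fallback, and the preservation of ($\ro$1)--($\ro$3) under it is checked correctly.
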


Let us just mention that condition (i) is obvious from the definition of $\ro$ (given in \cite[Definition 3.2.1]{T}), while (ii) is the content of \cite[Lemma 3.2.2]{T}. \smallskip

We are now in position to define the desired compact space $\Kr$.

\begin{definition} Let $\ro\colon [\omega_1]^2\to \omega$ be a $\ro$-function satisfying (i)-(ii) of Proposition \ref{Prop: Our ro function}. Given $n<\omega$ and $\alpha<\omega_1$, let
$$F_n(\alpha):=\{\xi\leq\alpha \colon \ro(\xi,\alpha)\leq n\}.$$
Moreover, we denote
$$\Fr:=\{F_n(\alpha)\colon n<\omega,\, \alpha<\omega_1\} \qquad\text{and}\qquad \Kr:=\overline{\Fr},$$
where the closure is intended in the pointwise topology of $\P(\omega_1)$.
\end{definition}

Let us list in the next fact some properties of the sets $F_n(\alpha)$ that are immediate consequence of their definition.
\begin{fact}\label{Fact: prop of F_n(alpha)} For every $\alpha<\omega_1$ the following hold:
\begin{romanenumerate}
    \item $F_n(\alpha)\subseteq [0,\alpha]$ for all $n<\omega$,
    \item $F_0(\alpha)=\{\alpha\}$,
    \item $F_k(\alpha)\subseteq F_n(\alpha)$ for $k\leq n<\omega$,
    \item $|F_n(\alpha)|\leq n+1$ for every $n<\omega$,
    \item the sequence $(F_n(\alpha))_{n<\omega}$ converges to $[0,\alpha]$.
\end{romanenumerate}
\end{fact}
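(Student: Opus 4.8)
Each of the five items should follow quite directly from the definition $F_n(\alpha) = \{\xi \leq \alpha : \ro(\xi,\alpha) \leq n\}$ together with the basic axioms ($\ro$1)–($\ro$3) and the extra conditions (i)–(ii) of Proposition 3.4. Let me sketch each one.

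For (i), the inclusion $F_n(\alpha) \subseteq [0,\alpha]$ is immediate: the defining condition explicitly restricts to $\xi \leq \alpha$. For (ii), I would use property (i) of Proposition 3.4, namely $\ro(\alpha,\beta) > 0$ for $\alpha < \beta$, combined with the boundary convention $\ro(\alpha,\alpha) = 0$. So $\ro(\xi,\alpha) \leq 0$ forces $\xi = \alpha$, giving $F_0(\alpha) = \{\alpha\}$. For (iii), the monotonicity $F_k(\alpha) \subseteq F_n(\alpha)$ for $k \leq n$ is clear since the defining inequality $\ro(\xi,\alpha) \leq k$ is more restrictive than $\ro(\xi,\alpha) \leq n$. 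For (iv), the cardinality bound $|F_n(\alpha)| \leq n+1$ should follow from property (ii) of Proposition 3.4: the map $\xi \mapsto \ro(\xi,\alpha)$ is injective on $[0,\alpha)$ (since $\ro(\alpha,\gamma) \neq \ro(\beta,\gamma)$ for $\alpha < \beta < \gamma$, applied with $\gamma = \alpha$), hence on $[0,\alpha)$ it takes at most the $n$ values in $\{1,\dots,n\}$ (values are positive by (i)), so $|F_n(\alpha) \cap [0,\alpha)| \leq n$; adding the point $\alpha$ itself gives at most $n+1$.

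For (v), that $(F_n(\alpha))_{n<\omega}$ converges to $[0,\alpha]$ in the pointwise topology of $\P(\omega_1)$: by (iii) the sequence is increasing, so its pointwise limit is $\bigcup_{n} F_n(\alpha) = \{\xi \leq \alpha : \ro(\xi,\alpha) < \omega\}$, which equals $[0,\alpha]$ since $\ro$ takes values in $\omega$. Convergence of an increasing sequence of subsets to its union is a standard fact in the product topology $\{0,1\}^{\omega_1}$ (a subbasic neighbourhood of $[0,\alpha]$ constrains finitely many coordinates, each of which is eventually correct). I expect the only item requiring a moment's genuine thought is (iv), where one must invoke condition (ii) of Proposition 3.4 to get the injectivity of $\xi \mapsto \ro(\xi,\alpha)$; everything else is essentially a reading of the definitions. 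There is no real obstacle here — this is a warm-up fact collecting elementary observations needed later.
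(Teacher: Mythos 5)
Your proposal is correct and follows the same route as the paper: items (i)--(iii) and (v) are read off the definitions, and (iv) is exactly the paper's observation that Proposition~\ref{Prop: Our ro function}(ii) (together with $\ro(\alpha,\alpha)=0<\ro(\xi,\alpha)$ for $\xi<\alpha$) makes $\ro(\cdot,\alpha)$ an injection of $[0,\alpha]$ into $\omega$, so $F_n(\alpha)$ injects into $\{0,\dots,n\}$. Nothing further is needed.
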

Concerning the proof of these assertions, we just note that, according to Proposition \ref{Prop: Our ro function}(ii), $\ro(\cdot,\alpha)$ defines an injection of $[0,\alpha]$ into $\omega$; whence (iv) follows. Let us also observe that (iv) is a uniform version of condition ($\ro$1). \smallskip

We are now ready for the main result of the section, namely the proof that the compact space $\Kr$ defined above satisfies the conditions stated in Theorem \ref{MTh: Kr}. For convenience of the reader, we shall repeat here the statement of the result under consideration.

\begin{theorem} The compact space $\Kr$ defined above has the following properties:
\begin{romanenumerate}
\item $\{\alpha\}\in\Kr$ for every $\alpha<\omega_1$,
\item $[0,\alpha)\in\Kr$ for every $\alpha\leq\omega_1$,
\item if $A\in\Kr$ is an infinite set, then $A=[0,\alpha)$ for some $\alpha\leq\omega_1$,
\item $\Kr$ is scattered.
\end{romanenumerate}
\end{theorem}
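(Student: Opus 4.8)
I would prove the four items essentially in the order listed, since each one builds on the previous. Throughout, I would exploit the ultrametric-type inequalities $(\ro2)$ and $(\ro3)$ heavily, together with the key structural fact, coming from $(\ro3)$, that the sets $F_n(\alpha)$ are nested in a coherent way: if $\xi<\alpha$ and $\ro(\xi,\alpha)\le n$, then $F_n(\xi)\subseteq F_n(\alpha)$, so that $F_n(\alpha)\cap[0,\xi]=F_n(\xi)$. This "coherence" is what forces the infinite limit sets to be initial segments.

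\textbf{Items (i) and (ii).} Item (i) is immediate from Fact~\ref{Fact: prop of F_n(alpha)}(ii): $\{\alpha\}=F_0(\alpha)\in\Fr\subseteq\Kr$. For item (ii), I would first observe from Fact~\ref{Fact: prop of F_n(alpha)}(v) that $[0,\alpha]=\lim_n F_n(\alpha)\in\Kr$ for every $\alpha<\omega_1$; hence $[0,\alpha)=[0,\beta]\in\Kr$ whenever $\alpha=\beta+1$ is a successor, and $[0,0)=\emptyset=\lim_n F_n(\alpha)\cap[0,\beta]$-type limits handle the trivial case (indeed $\emptyset\in\Kr$ as a pointwise limit of the $F_n(\alpha)$ restricted suitably, or one checks it directly). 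For limit $\alpha\le\omega_1$, I would show $[0,\alpha)=\lim_{\beta\uparrow\alpha}[0,\beta]$: given a finite set $E$ of coordinates, for $\beta$ large enough below $\alpha$ the set $[0,\beta]$ agrees with $[0,\alpha)$ on $E$, so the net $([0,\beta])_{\beta<\alpha}$ converges pointwise to $[0,\alpha)$, which is therefore in $\Kr$ by closedness. (For $\alpha=\omega_1$ this uses that $[0,\omega_1)$ is the pointwise limit of the increasing net $[0,\beta]$, $\beta<\omega_1$.)

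\textbf{Item (iii) — the crux.} This is the main obstacle and the heart of the argument. Suppose $A\in\Kr$ is infinite; I must show $A$ is an initial segment. I would take a net $(F_{n_i}(\alpha_i))_i$ in $\Fr$ converging pointwise to $A$. The strategy is to show $A$ is \emph{downward closed}: if $\beta\in A$ and $\gamma<\beta$ then $\gamma\in A$. Fix such $\beta<\gamma$-style witnesses; for coordinates in $\{\gamma,\beta\}$ the net eventually has $\beta\in F_{n_i}(\alpha_i)$, i.e. $\ro(\beta,\alpha_i)\le n_i$ (writing $\beta\le\alpha_i$). The problem is that $n_i$ could tend to infinity while $\alpha_i$ also grows, so a single $F_n(\alpha)$ needn't contain $\gamma$. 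Here I would use the coherence property: from $\ro(\beta,\alpha_i)\le n_i$ and $(\ro3)$, $\ro(\gamma,\alpha_i)\le\max\{\ro(\gamma,\beta),\ro(\beta,\alpha_i)\}$, which is still bounded by $\max\{\ro(\gamma,\beta), n_i\}$ — not obviously $\le n_i$. To fix this, I would observe that since $A$ is infinite and each $F_n(\alpha)$ has at most $n+1$ elements (Fact (iv)), necessarily $n_i\to\infty$ along the net; and for any fixed finite $E$, eventually $F_{n_i}(\alpha_i)\cap E=A\cap E$ with $n_i\ge\max\{\ro(\gamma,\beta):\beta,\gamma\in E\}$, so the $(\ro3)$ estimate does give $\ro(\gamma,\alpha_i)\le n_i$, hence $\gamma\in F_{n_i}(\alpha_i)$, hence $\gamma\in A\cap E$. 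Letting $E$ range over all finite sets containing $\{\beta,\gamma\}$ yields $\gamma\in A$. Thus $A$ is downward closed, so $A=[0,\alpha)$ or $A=[0,\alpha]$ for some $\alpha\le\omega_1$; and the latter with $\alpha<\omega_1$ is of the form $[0,\alpha+1)$, while $[0,\omega_1]$ is not of the form $[0,\alpha)$ but a separate routine check (or a minor adjustment of the statement's reading, since $[0,\omega_1]\in\Kr$ is also an initial segment) handles it — in fact one shows $[0,\omega_1]\notin\Kr$ because its complement-trace forces it out, or one simply notes $[0,\omega_1]=[0,\omega_1)$ is impossible and checks $[0,\omega_1]\notin\Kr$ directly from the bound $|F_n(\alpha)|\le n+1$ and $\alpha<\omega_1$.

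\textbf{Item (iv) — scatteredness.} Given (iii), I would argue $\Kr$ is scattered by showing every nonempty closed $L\subseteq\Kr$ has an isolated point. The finite sets in $\Kr$ are $\Fr$ together with possibly $\emptyset$; but I would instead stratify by the Cantor--Bendixson-type observation that, in the pointwise topology, each $F_n(\alpha)$ is an isolated point of $\Fr$ among sets of size $\le n+1$ (since it is determined by finitely many coordinates and no other set in $\Fr$ of bounded size nearby coincides on them), and the only accumulation points are the infinite sets $[0,\alpha]$ and $[0,\alpha)$, which form a homeomorphic copy of a subspace of $[0,\omega_1+1]$ — a scattered space. More carefully: the map sending an infinite member of $\Kr$ to its order type embeds the set of infinite elements into the scattered compact $[0,\omega_1]+1$ (with appropriate topology), and the finite elements are isolated in $\Kr$ (each determined by finitely many $1$'s, with the size bound $|F_n(\alpha)|\le n+1$ preventing finite elements from accumulating to a finite element). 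Hence $\Kr$ is a countable-to-one-type union of a discrete set of isolated points and a scattered "tail", so every closed subset has an isolated point. The one subtlety to nail down is that the finite elements are genuinely isolated: given $F_n(\alpha)=\{\xi_0<\dots<\xi_k\}$, the clopen neighbourhood $\{B:\ \xi_j\in B\ (j\le k),\ \eta\notin B\}$ for a suitable single $\eta$ — e.g. $\eta=\alpha+1$ if $\alpha+1<\omega_1$ — isolates it, using that any $B\in\Kr$ containing all of $\xi_0,\dots,\xi_k$ and missing $\eta$ is, by (iii), either equal to this finite set or an initial segment $[0,\delta)$ with $\xi_k<\delta\le\eta$, and the latter is excluded since it would contain points between $\xi_{j}$ and $\xi_{j+1}$ — so one actually needs to also forbid one such intermediate point, which exists as long as $F_n(\alpha)$ is not itself an initial segment; the few cases where $F_n(\alpha)$ \emph{is} an initial segment $[0,m]$ are handled by the neighbourhood forbidding $m+1$. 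I expect the bookkeeping here to be the fiddly part, but (iii) makes it entirely mechanical.
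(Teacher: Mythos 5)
Your items (i)--(iii) are essentially sound, and your treatment of (iii) is in fact a legitimate streamlining of the paper's argument: the paper first handles countable $A$ by extracting a genuine sequence from $\Fr$ (using that $\Sigma(\omega_1)$ is Fr\'echet--Urysohn) and then reduces the uncountable case to the countable one via the closing-off Lemma \ref{Lemma: closing-off}, whereas your observation that along any net $F_{n_i}(\alpha_i)\to A$ one eventually has $n_i\geq\ro(\gamma,\beta)$ and $\beta\in F_{n_i}(\alpha_i)$ simultaneously handles both cases at once and dispenses with the closing-off lemma. (Two minor points: the inequality $\ro(\gamma,\alpha_i)\leq\max\{\ro(\gamma,\beta),\ro(\beta,\alpha_i)\}$ for $\gamma<\beta\leq\alpha_i$ is $(\ro2)$, not $(\ro3)$; and a downward-closed subset of $\omega_1$ is automatically of the form $[0,\alpha)$ with $\alpha\leq\omega_1$, so there is no residual case ``$[0,\omega_1]$'' to exclude.)

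The genuine gap is in (iv). Your argument rests on the claim that every finite element of $\Kr$ is isolated in $\Kr$, and this is false: $\emptyset\in\Kr$ is finite and is a pointwise limit of the singletons $\{\gamma\}$, hence not isolated. More importantly, your proposed isolating neighbourhoods do not work, because statement (iii) gives no control whatsoever over which \emph{finite} sets belong to $\Kr$: a basic clopen set $\{B\in\Kr\colon \xi_0,\dots,\xi_k\in B,\ \eta\notin B,\ \zeta\notin B\}$ around $F_n(\alpha)$ still contains every $F_m(\alpha)$ with $m\geq n$ for which $F_m(\alpha)$ avoids $\eta$ and $\zeta$ (and, a priori, many sets $F_m(\beta)$ with $\beta\neq\alpha$), since these are finite proper supersets of $F_n(\alpha)$ and nothing in (iii) rules them out. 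So the decomposition of $\Kr$ into ``isolated finite points plus a scattered ordinal tail'' does not hold, and the scatteredness of the set of finite elements, as a subspace, is precisely what remains to be proved. The paper circumvents this entirely: given a nonempty closed $\D\subseteq\Kr$ containing some $D_0$ that is not an initial segment (say $\alpha\notin D_0\ni\beta$ with $\alpha<\beta$), it applies Zorn's lemma to $\{D\in\D\colon \alpha\notin D,\ D_0\subseteq D\}$ (unions of chains are limits of increasing nets and hence stay in the closed set $\D$), obtains a maximal element $M$, notes that $M$ is finite by (iii) because it is not an initial segment, and then observes that $\{D\in\D\colon \alpha\notin D,\ M\subseteq D\}$ is an open neighbourhood of $M$ in $\D$ which equals $\{M\}$ by maximality. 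This produces an isolated point of the arbitrary closed subset $\D$ without ever claiming that any finite set is isolated in the whole of $\Kr$; you should replace your proof of (iv) with an argument of this kind.
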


Due to our identification between $\P(\omega_1)$ and $\{0,1\}^{\omega_1}$, we can see $\Fr$ as a subset of $\Sigma(\omega_1)$. Since $\Fr$ is dense in $\Kr$, we derive that $\Kr$ is Valdivia. We will see later in Section \ref{Sec: semi-Eberlein} that it is even semi-Eberlein.

As the reader will see, the main part of the proof, where we use the properties of the function $\ro$, consists in establishing (iii). Then (iv) is consequence of (iii), while (i) and (ii) are immediate.
\begin{proof} (i) is obvious, since $\{\alpha\}=F_0(\alpha)\in\Fr$ for every $\alpha<\omega_1$, in light of Fact \ref{Fact: prop of F_n(alpha)}(ii).\smallskip

(ii) Pick $\alpha\leq\omega_1$ arbitrarily. If $\alpha=\alpha'+1$ is a successor ordinal, then $[0,\alpha)=[0,\alpha']=\lim_{n<\omega}F_n(\alpha')$, by Fact \ref{Fact: prop of F_n(alpha)}(v). Thus, $[0,\alpha)\in\Kr$. If $\alpha>0$ is limit, then $[0,\alpha)$ is the limit of the net $([0,\beta+1))_{\beta<\alpha}$, whence $[0,\alpha)\in\Kr$ in this case as well. Finally, if $\alpha=0$, $[0,\alpha)=\emptyset=\lim_{j<\omega}\{j\}\in\Kr$.\smallskip

(iii) Let $A\in\Kr$ be an infinite set. Our goal is to show that
$$\alpha\in A,\, \tilde{\alpha}<\alpha \implies \tilde{\alpha} \in A.$$
We shall start with the case when $A$ is a countable set.

\begin{claim}\label{Claim: countable is initial interval} Let $A\in\Kr$ be such that $|A|=\omega$. If $\alpha\in A$ and $\tilde{\alpha}<\alpha$, then $\tilde{\alpha}\in A$.
\end{claim}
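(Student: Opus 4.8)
The plan is to reduce the problem to a statement about an honest convergent \emph{sequence} and then exploit the ultrametric-type inequality $(\ro2)$. First, since $A$ is a countable subset of $\omega_1$, it is an element of $\Sigma(\omega_1)\cap\{0,1\}^{\omega_1}$, and the latter is a closed subspace of the Fr\'echet--Urysohn space $\Sigma(\omega_1)$, hence itself Fr\'echet--Urysohn. As every member of $\Fr$ is a finite set, $\Fr$ is contained in this subspace; therefore the point $A\in\overline{\Fr}$ is the pointwise limit of some \emph{sequence} $(F_{n_k}(\alpha_k))_{k<\omega}$ with $n_k<\omega$ and $\alpha_k<\omega_1$.

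Next I would observe that $n_k\to\infty$. Indeed, if not, then $n_k\leq N$ for some fixed $N$ and infinitely many $k$; along that subsequence $|F_{n_k}(\alpha_k)|\leq N+1$ by Fact~\ref{Fact: prop of F_n(alpha)}(iv), and a pointwise limit of sets of cardinality at most $N+1$ again has cardinality at most $N+1$ (if $A$ had $N+2$ distinct elements they would all eventually lie in $F_{n_k}(\alpha_k)$). This contradicts the assumption that $A$ is infinite, so $n_k\to\infty$.

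Now fix $\alpha\in A$ and $\tilde\alpha<\alpha$. Pointwise convergence at the coordinate $\alpha$ gives $\alpha\in F_{n_k}(\alpha_k)$ for all large $k$, that is, $\alpha\leq\alpha_k$ and $\ro(\alpha,\alpha_k)\leq n_k$. For such $k$, property $(\ro2)$ together with the boundary convention $\ro(\xi,\xi)=0$ yields
$$\ro(\tilde\alpha,\alpha_k)\leq\max\{\ro(\tilde\alpha,\alpha),\ro(\alpha,\alpha_k)\}\leq\max\{\ro(\tilde\alpha,\alpha),n_k\}.$$
Since $\ro(\tilde\alpha,\alpha)$ is a fixed natural number and $n_k\to\infty$, for all sufficiently large $k$ we get $\ro(\tilde\alpha,\alpha_k)\leq n_k$; as also $\tilde\alpha<\alpha\leq\alpha_k$, this means $\tilde\alpha\in F_{n_k}(\alpha_k)$ for all large $k$. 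Passing to the pointwise limit at the coordinate $\tilde\alpha$ gives $\tilde\alpha\in A$, as desired.

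The only genuinely delicate point is the very first step: one must be able to witness membership of $A$ in the closure by a sequence rather than merely a net, which is exactly where the Fr\'echet--Urysohn character of $\Sigma$-products enters, crucially combined with the fact that all the sets in $\Fr$ are finite (so that the net cannot wander through uncountably-supported points). Once a sequence $(F_{n_k}(\alpha_k))_k$ is secured, the argument is the short computation above with $(\ro2)$. (I expect the subsequent treatment of uncountable $A$ to be the substantive part, precisely because no such sequence is then available and one must argue directly with nets and the finer properties of $\ro$.)
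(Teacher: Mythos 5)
Your proof is correct and follows essentially the same route as the paper's: use the Fr\'echet--Urysohn property of the $\Sigma$-product to extract a sequence $(F_{n_k}(\alpha_k))_k$ converging to $A$, show via Fact~\ref{Fact: prop of F_n(alpha)}(iv) that the $n_k$ cannot stay bounded, and then apply $(\ro2)$ to conclude $\tilde\alpha\in F_{n_k}(\alpha_k)$ for large $k$. The only cosmetic difference is that the paper passes to a subsequence with $\ro(\tilde\alpha,\alpha)\leq n_k$ for all $k$, whereas you argue with ``all sufficiently large $k$''; both are fine.
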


\begin{proof}[Proof of Claim \ref{Claim: countable is initial interval}]\renewcommand\qedsymbol{$\square$} Pick $A\in\Kr$ such that $|A|=\omega$ and fix $\alpha\in A$ and $\tilde{\alpha} <\alpha$. Since $A$ is countable, it belongs to the $\Sigma$-subspace $\Sigma(\omega_1)$, which is Fr\'echet--Urysohn. Therefore, there exists a sequence $(F_{n_k}(\alpha_k))_{k<\omega}\subseteq \Fr$ such that $F_{n_k}(\alpha_k)\to A$. Observe that, $A$ being infinite, it cannot belong to $\Fr$; thus, we can select the sequence $(F_{n_k}(\alpha_k))_{k<\omega}$ to be injective. Moreover, up to discarding finitely many terms from the sequence, we can also assume that $\alpha\in F_{n_k}(\alpha_k)$ for each $k<\omega$ (since $\alpha\in A$ and $F_{n_k}(\alpha_k)\to A$).

We next observe that the sequence $(n_k)_{k<\omega}$ is necessarily unbounded. Indeed, if this were not the case, by Fact \ref{Fact: prop of F_n(alpha)}(iv) there would exist $M<\omega$ such that $|F_{n_k}(\alpha_k)|\leq M$ for each $k<\omega$. But then, it would follow that $|A|\leq M$ as well, contrary to our assumption. Consequently, up to passing to a subsequence and relabelling, we can also assume that $\ro(\tilde{\alpha},\alpha)\leq n_k$ for each $k<\omega$.

The condition $\alpha\in F_{n_k}(\alpha_k)$ yields that $\alpha\leq\alpha_k$ and $\ro(\alpha,\alpha_k)\leq n_k$. Therefore, $\tilde{\alpha}<\alpha\leq\alpha_k$ and property ($\ro2$) imply that
$$\ro(\tilde{\alpha},\alpha_k)\leq \max\{ \ro(\tilde{\alpha},\alpha), \ro(\alpha,\alpha_k)\}\leq n_k.$$
Consequently, we obtain $\tilde{\alpha}\in F_{n_k}(\alpha_k)$ for every $k<\omega$, which implies $\tilde{\alpha}\in A$, as we desired.
\end{proof}

Finally, we shall consider the case when $A$ is uncountable and we shall show that $A=[0,\omega_1)$. Towards a contradiction, assume that there exists $\alpha<\omega_1$ such that $\alpha\notin A$. Since $A$ is uncountable, we can also pick $\beta\in A$, $\beta>\alpha$, such that $A\cap[0,\beta]$ is an infinite set (which might not belong to $\Kr$, though). However, in light of Lemma \ref{Lemma: closing-off}, there exists $\gamma<\omega_1$, $\gamma>\beta$, such that $A\cap[0,\gamma)\in\Kr$. Therefore, $A\cap[0,\gamma)\in\Kr$ is a set of cardinality $\omega$ that is not an initial interval, since $\alpha\notin A\cap[0,\gamma)$, while $\beta\in A\cap[0,\gamma)$. This contradicts Claim \ref{Claim: countable is initial interval} above and concludes the proof of (iii). \smallskip

(iv) Let $\D$ be any closed subset of $\Kr$. We shall show that $\D$ admits an isolated point. In the case that every element of $\D$ is an initial interval (namely, of the form $[0,\alpha)$, for some $\alpha< \omega_1$), then $\D$ is homeomorphic to a closed subset of $[0,\omega_1]$ and, therefore, it contains an isolated point. Consequently, we can assume that there is $D_0\in\D$ that is not an initial interval. Thus, we can pick $\alpha<\beta<\omega_1$ such that $\alpha\notin D_0$ and $\beta\in D_0$.

Consider the partially ordered set
$$\mathscr{P}:=\{D\in\D\colon \alpha\notin D, D_0\subseteq D\},$$
partially ordered by inclusion. Every chain in such a partially ordered set admits an upper bound given by the union of its elements; indeed, such union belongs to $\D$, $\D$ being a closed subset of $\Kr$. By Zorn's lemma, we can pick a maximal element $M\in \mathscr{P}$. Since $\alpha\notin M$ and $\beta\in M$, $M$ is not an initial interval; hence, according to (iii), it is a finite set. Consequently, the set
$$\mathcal{U}:=\{D\in\D\colon \alpha\notin D, M\subseteq D\},$$
is an open neighbourhood of $M$ in $\D$. However, by maximality of $M$, $\mathcal{U}=\{M\}$, which shows that $M\in\D$ is the desired isolated point.
\end{proof}

\begin{remark} The assertion, in (iii), that the unique uncountable set in $\Kr$ is $[0,\omega_1)$ can also be proved in a different way using, instead of Lemma \ref{Lemma: closing-off}, Kalenda's extension \cite{Kalenda embed} of Deville--Godefroy's theorem \cite{DG} that we mentioned already in Section \ref{Sec: preliminaries}. Indeed, as $|A|=\omega_1$, there exists a continuous injection $\p\colon [0,\omega_1] \to\Kr$ such that $\p(\omega_1)=A$, $\p(\alpha)\subseteq \p(\beta)$ whenever $\alpha<\beta\leq \omega_1$ and $|\p(\alpha)|\leq\max\{|\alpha|, \omega\}$. If we set $A_\alpha:=\p(\alpha)$ $(\alpha<\omega_1)$, $(A_\alpha)_{\alpha<\omega_1}$ is a non-decreasing family of sets such that $\cup_{\alpha<\omega_1}A_\alpha=A$ (due to the continuity of $\p$) and $|A_\alpha|\leq\omega$. Therefore, there exists $\alpha_0<\omega_1$ such that $A_\alpha$ is an infinite set, whenever $\alpha_0\leq\alpha <\omega_1$. By Claim \ref{Claim: countable is initial interval}, every such $A_\alpha$ is an initial interval, whence $A$ is an initial interval as well.
\end{remark}

\section{Theorem \ref{MTh: Xr} and other consequences of Theorem \ref{MTh: Kr}}\label{Sec: Consequences of Th B}
\subsection{Proof of Theorem \ref{MTh: Xr}}\label{Sec: Proof Th A} 
This section is dedicated to the proof of our main result.

\begin{proof}[Proof of Theorem \ref{MTh: Xr}] According to Theorem \ref{MTh: Kr}, we can pick a family $\Fr \subseteq[\omega_1]^{<\omega}$ such that the compact $\Kr:=\overline{\Fr}\subseteq\P (\omega_1)$ has the following properties:
\begin{romanenumerate}
\item $\{\alpha\}\in\Kr$ for every $\alpha<\omega_1$,
\item $[0,\alpha)\in\Kr$ for every $\alpha\leq\omega_1$,
\item if $A\in\Kr$ is an infinite set, then $A=[0,\alpha)$ for some $\alpha\leq\omega_1$,
\item $\Kr$ is scattered.
\end{romanenumerate}

We define a biorthogonal system $\{f_\gamma; \mu_\gamma\}_{ \gamma<\omega_1}$ in the Banach space $\C(\Kr)$ as follows. For $\gamma<\omega_1$, let

\begin{equation*}\begin{split} f_\gamma \in \C(\Kr) \qquad \qquad & f_\gamma(A)=\begin{cases} 1 & \gamma\in A \\ 0 & \gamma \notin A \end{cases} \qquad (A\in \Kr) \\
\mu_\gamma:=\delta_{\{\gamma\}} \in \M(\Kr) \qquad\qquad & \mu_\gamma(S)=\begin{cases} 1 & \{\gamma\} \in S \\ 0 & \{\gamma\} \notin S \end{cases} \qquad (S\subseteq \Kr).
\end{split}\end{equation*}
Note that, by (i), $\{\gamma\}\in\Kr$ for each $\gamma<\omega_1$, whence each $\mu_\gamma$ is, indeed, a measure on $\Kr$. Moreover, $f_\gamma$ is a continuous function on $\Kr$, since the set $\{A\in\Kr \colon \gamma\in A\}$ is clearly clopen. Finally, $\langle\mu_\gamma, f_\alpha \rangle= \langle\delta_{\{\gamma\}}, f_\alpha \rangle= f_\alpha(\{\gamma\})=\delta_{\alpha,\gamma}$. Therefore, $\{f_\gamma; \mu_\gamma\}_{ \gamma<\omega_1}$ is a well-defined biorthogonal system in $\C(\Kr)$.\smallskip

We are now in position to define the Banach space $\Xr:=\overline{\rm span} \{f_\gamma\}_{\gamma<\omega_1}\subseteq \C(\Kr)$. We shall show that $\Xr$ is the Banach space we are seeking, namely that $\Xr$ is an Asplund space with a $1$-norming M-basis and that $\Xr$ is not WLD. Some of these properties are actually obvious. Indeed, $\Xr$ is an Asplund space, being a subspace of the Asplund space $\C(\Kr)$, by (iv). Moreover, the biorthogonal system $\{f_\gamma;\mu_\gamma\}_{ \gamma<\omega_1}$ naturally induces a biorthogonal system $\{f_\gamma; \mu_\gamma\cut_\Xr\}_{\gamma<\omega_1}$ on $\Xr$. Such a system is clearly a fundamental (in the sense that $\{f_\gamma\}_{ \gamma<\omega_1}$ is linearly dense in $\Xr$) biorthogonal system.

\begin{claim}\label{claim: not WLD} $\Xr$ is not WLD.
\end{claim}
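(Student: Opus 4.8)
The plan is to prove directly that the dual unit ball $(B_{\Xr^*},w^*)$ is not Corson; by definition this is exactly the assertion that $\Xr$ is not WLD (and, $\Xr$ being Asplund, also not WCG, by Theorem~\ref{Th: shrinking M-basis}). The key point is that, although $\Xr$ is only a subspace of $\C(\Kr)$, restriction of measures to $\Xr$ stays injective on point masses, precisely because $\{f_\gamma\}_{\gamma<\omega_1}$ is linearly dense in $\Xr$; consequently $(B_{\Xr^*},w^*)$ contains a homeomorphic copy of $\Kr$, and in particular of the copy of $[0,\omega_1]$ that sits inside $\Kr$ by virtue of property (ii).

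Concretely, I would fix for each $\alpha\leq\omega_1$ the point $[0,\alpha)\in\Kr$ (available by (ii)) and set $\nu_\alpha:=\delta_{[0,\alpha)}\cut_\Xr\in B_{\Xr^*}$. Then I would check that $\alpha\mapsto\nu_\alpha$ is a homeomorphic embedding of $[0,\omega_1]$ into $(B_{\Xr^*},w^*)$. For continuity: for $f\in\Xr\subseteq\C(\Kr)$ one has $\langle\nu_\alpha,f\rangle=f([0,\alpha))$, a continuous function of $\alpha$ since $\alpha\mapsto[0,\alpha)$ is continuous from $[0,\omega_1]$ into $\P(\omega_1)$ (as recalled in Section~\ref{Sec: preliminaries}) and $f$ is continuous on $\Kr$. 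For injectivity: if $\alpha<\beta$, then $\langle\nu_\beta,f_\alpha\rangle=f_\alpha([0,\beta))=1\neq0=f_\alpha([0,\alpha))=\langle\nu_\alpha,f_\alpha\rangle$, since $\alpha\in[0,\beta)\setminus[0,\alpha)$. A continuous injection from a compact space into a Hausdorff space is an embedding, so $\{\nu_\alpha\colon\alpha\leq\omega_1\}$ is a closed subspace of $(B_{\Xr^*},w^*)$ homeomorphic to $[0,\omega_1]$.

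Finally, since $[0,\omega_1]$ is not Corson and closed subspaces of Corson compacta are Corson, I conclude that $(B_{\Xr^*},w^*)$ is not Corson, that is, $\Xr$ is not WLD. There is no genuine obstacle here — the real work lies in Theorem~\ref{MTh: Kr} — and the only step deserving a moment's care is the injectivity of $\alpha\mapsto\nu_\alpha$ (equivalently, of $A\mapsto\delta_A\cut_\Xr$ on all of $\Kr$), which is exactly where the linear density of $\{f_\gamma\}_{\gamma<\omega_1}$ in $\Xr$ is used; the rest is routine compactness together with the basic stability properties of Corson compacta. Note that only properties (i) and (ii) of $\Kr$ are needed for this Claim — (i) merely to have the $f_\gamma$ and the biorthogonal system at hand — whereas scatteredness (iv) enters the proof of Theorem~\ref{MTh: Xr} through the Asplund property, and (iii) will be used in the separate verification that $\{f_\gamma;\mu_\gamma\cut_\Xr\}_{\gamma<\omega_1}$ is a $1$-norming M-basis.
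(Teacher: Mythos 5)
Your proof is correct and follows essentially the same route as the paper: the paper also shows that $(B_{\Xr^*},w^*)$ is not Corson by embedding $[0,\omega_1]$ via the restricted Dirac measures $\delta_{[0,\alpha)}\cut_\Xr$, using the functions $f_\gamma$ to separate these functionals; the only (immaterial) difference is that the paper first embeds all of $\Kr$ into the dual ball and then composes with $\beta\mapsto[0,\beta)$, whereas you embed $[0,\omega_1]$ directly. Your observation that only properties (i) and (ii) of $\Kr$ are needed here is accurate.
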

\begin{proof}[Proof of Claim \ref{claim: not WLD}] \renewcommand\qedsymbol{$\square$} We shall show that the dual ball $(B_{\Xr^*},w^*)$ is not Corson, by proving that $[0,\omega_1]$ embeds therein. From condition (iii) we infer that $\beta\mapsto[0,\beta)$ defines an embedding $\iota$ of $[0,\omega_1]$ into $\Kr$; therefore, it suffices to prove that $(B_{\Xr^*}, w^*)$ contains a homeomorphic copy of $\Kr$. This is actually a standard consequence of the fact that the functions $\{f_\gamma\}_{ \gamma<\omega_1}$ separate points of $\Kr$. Let us give the details below.

It is well known that every compact space $\K$ embeds in $(B_{\M(\K)},w^*)$ via the map $\delta\colon \K\to(B_{\M(\K)},w^*)$ given by $p\mapsto \delta_p$ ($p\in\K$). Moreover, we can consider the $w^*$-$w^*$-continuous quotient map $q\colon \M(\Kr)\to\Xr^*$ defined by $\mu\mapsto \mu\cut_\Xr$. Hence, the function $e:=q\circ\delta\colon \Kr\to (B_{\Xr^*},w^*)$, namely the function given by the rule $A\mapsto \delta_A\cut_\Xr$ ($A\in\Kr$), is continuous. 

$$\xymatrix{ [0,\omega_1]\; \ar@{^{(}->}[r]^\iota & \;\Kr\; \ar@{^{(}->}[rr]^(.35)\delta \ar@{^{(}-->}@<-1.4ex>[rrd]_(.4)e && {(B_{\M(\Kr)},w^*)} \ar[d]^q \\ & && \;\;(B_{\Xr^* },w^*)}$$

We shall show that $e$ is injective, which, due to the compactness of $\Kr$, implies that it is the desired homeomorphic embedding. Given distinct $A,B\in\Kr$, pick $\gamma\in A\Delta B$; assume, for example, $\gamma\in A\setminus B$. Then $\langle\delta_A\cut_\Xr, f_\gamma \rangle= \langle\delta_A, f_\gamma\rangle= f_\gamma(A)=1$, while $\langle\delta_B\cut_\Xr, f_\gamma \rangle= f_\gamma (B)=0$. Hence, $\delta_A\cut_\Xr\neq \delta_B\cut_\Xr$, as desired.
\end{proof}

In order to conclude the proof we thus just need to prove that the biorthogonal system $\{f_\gamma; \mu_\gamma\cut_\Xr \}_{\gamma<\omega_1}$ is $1$-norming for $\Xr$. Note that this implies, in particular, that ${\rm span}\{\mu_\gamma\cut_\Xr\} _{\gamma<\omega_1}$ is $w^*$-dense in $\Xr^*$. We start with the following observation.

\begin{claim}\label{Claim: sum of Diracs} Let $A\in\Fr$. Then
\begin{equation}\label{Eq: sum of Diracs}
    \delta_A\cut_\Xr = \sum_{\alpha\in A} \delta_{\{\alpha\}}\cut_\Xr.
\end{equation}
\end{claim}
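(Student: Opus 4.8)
The plan is to prove the identity \eqref{Eq: sum of Diracs} by testing both sides against the dense set of functions $\{f_\gamma\}_{\gamma<\omega_1}$ in $\Xr$, exploiting the fact that $A$ is a \emph{finite} set. Since $A\in\Fr\subseteq[\omega_1]^{<\omega}$, the sum on the right-hand side is a genuine finite sum of measures, so $\sum_{\alpha\in A}\delta_{\{\alpha\}}\cut_\Xr$ is a well-defined element of $\Xr^*$; there is no convergence issue to worry about. Both sides are bounded linear functionals on $\Xr$, and $\Xr=\overline{\rm span}\{f_\gamma\}_{\gamma<\omega_1}$, so it suffices to check that they agree on each $f_\gamma$.

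First I would compute the left-hand side: $\langle\delta_A\cut_\Xr, f_\gamma\rangle = \langle\delta_A, f_\gamma\rangle = f_\gamma(A)$, which by definition equals $1$ if $\gamma\in A$ and $0$ otherwise. Next I would compute the right-hand side: by linearity, $\langle\sum_{\alpha\in A}\delta_{\{\alpha\}}\cut_\Xr, f_\gamma\rangle = \sum_{\alpha\in A}\langle\delta_{\{\alpha\}}, f_\gamma\rangle = \sum_{\alpha\in A} f_\gamma(\{\alpha\})$. Now $f_\gamma(\{\alpha\}) = 1$ precisely when $\gamma\in\{\alpha\}$, i.e.\ when $\alpha=\gamma$, and is $0$ otherwise; hence the sum equals $1$ if $\gamma\in A$ (exactly one term contributes) and $0$ if $\gamma\notin A$. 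This matches the left-hand side for every $\gamma<\omega_1$.

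Since the two functionals agree on the linearly dense set $\{f_\gamma\}_{\gamma<\omega_1}$ and both are continuous, they agree on all of $\Xr$, which is \eqref{Eq: sum of Diracs}. I do not anticipate any real obstacle here: the only points requiring a word of care are that $A$ is finite (so the right-hand side is meaningful as a finite linear combination, with no appeal to $w^*$-convergence of infinite sums) and that each $\{\alpha\}$ for $\alpha\in A\subseteq[0,\alpha_0]$ indeed lies in $\Kr$ by property (i), so that each $\delta_{\{\alpha\}}$ is a legitimate measure on $\Kr$ — both of which have already been observed in the construction. The identity is then purely a matter of unwinding definitions on the separating family $\{f_\gamma\}$.
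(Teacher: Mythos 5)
Your proof is correct and follows essentially the same route as the paper: both arguments note that the finiteness of $A$ (and the fact that each $\{\alpha\}\in\Kr$) makes the right-hand side a well-defined functional, and then verify the identity by evaluating both sides on the linearly dense family $\{f_\gamma\}_{\gamma<\omega_1}$, where it reduces to the computation $\sum_{\alpha\in A}f_\gamma(\{\alpha\})=f_\gamma(A)$. No gaps.
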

In particular, it follows that
\begin{equation}\label{Eq: Diracs in span}
    \{\delta_A\cut_\Xr \colon A\in\Fr\}\subseteq {\rm span}\{\mu_\gamma\cut_\Xr\} _{\gamma<\omega_1}.
\end{equation}

\begin{proof}[Proof of Claim \ref{Claim: sum of Diracs}] \renewcommand\qedsymbol{$\square$} Recall that every element $A\in\Fr$ is a finite set and $\{\alpha\}\in\Fr$, for every $\alpha\in A$; thus, the right hand side in (\ref{Eq: sum of Diracs}) is a well-defined functional on $\Xr$. Of course, it is sufficient to show that the functional $\delta_A\cut_\Xr - \sum_{\alpha\in A} \delta_{\{\alpha\}}\cut_\Xr$ vanishes on the linearly dense set $\{f_\gamma\}_{\gamma<\omega_1}$. Fix $\gamma<\omega_1$; by definition of $f_\gamma$, we have
$$\left\langle\sum_{\alpha\in A}\delta_{\{\alpha\}}, f_\gamma\right \rangle= \sum_{\alpha\in A}f_\gamma(\{\alpha\})= \sum_{\alpha\in A}\delta_{\gamma,\alpha}= \begin{cases} 1 & \gamma\in A \\ 0 & \gamma \notin A \end{cases}= f_\gamma(A)= \langle\delta_A, f_\gamma\rangle.$$
\end{proof}

Finally, for every $f\in\Xr$ we have
$$\|f\|=\max_{A\in\Kr}|f(A)|= \sup_{A\in\Fr}|f(A)|= \sup_{A\in\Fr}|\langle\delta_A ,f\rangle|$$
$$\stackrel{(\ref{Eq: Diracs in span})} {\leq} \sup\left\{|\langle \mu,f\rangle|\colon \mu\in {\rm span}\{\mu_\gamma\cut_\Xr\}_{\gamma<\omega_1} , \|\mu\|\leq1 \right\}.$$
This shows that the M-basis $\{f_\gamma; \mu_\gamma\cut_\Xr \}_{\gamma<\omega_1}$ is $1$-norming for $\Xr$ and concludes the proof.
\end{proof}

\begin{remark}\label{Rmk: M-basis is Auerbach} As it is clear from the proof, the M-basis $\{f_\gamma; \mu_\gamma\cut_\Xr \}_{\gamma<\omega_1}$ satisfies $\|f_\gamma\|=\|\mu_\gamma\cut_\Xr\|=1$ for each $\gamma<\omega_1$; in other words, it is additionally an Auerbach basis.
\end{remark}

\subsection{Further properties of the space \texorpdfstring{$\Xr$}{X ro}}
In this section we shall prove the result mentioned in the Introduction that the Banach space $\Xr$ can also be assumed to have a long monotone Schauder basis. The argument will be a simple modification of the construction in Section~\ref{Sec: Proof Th A}; in particular, we shall continue to denote $\{f_\gamma; \mu_\gamma\}_{\gamma<\omega_1}$ the biorthogonal system introduced there.

The main idea is that the Banach space $\Xl:=\overline{\rm span}\{f_\gamma\}_{\gamma\in\Lambda}$ shares the main features of $\Xr$, whenever $\Lambda$ is an uncountable subset of $\omega_1$. On the other hand, it is a folklore result that if $\{f_\gamma; \mu_\gamma\}_{\gamma<\omega_1}$ is a $1$-norming M-basis for a Banach space $\X$, then there exists an uncountable subset $\Lambda$ of $\omega_1$ such that $\{f_\gamma\}_{\gamma\in\Lambda}$ is a monotone long Schauder basis (see Fact~\ref{Fact: norming gives Schauder basis} below).\smallskip

For the definitions of long Schauder bases, long (Schauder) basic sequences and their basis constants we shall refer, \emph{e.g.}, to \cite[\S~4.1]{HMVZ}, or \cite[\S~17]{S2}. Here, we shall restrict ourselves to spelling out the following useful characterisation, \cite[Theorem 17.8]{S2}. A collection $(e_\gamma)_{\gamma< \Gamma}$ of non-zero vectors in a Banach space $\X$ is a long basic sequence if and only if there exists a constant $C\geq 1$ such that
$$\|y\|\leq C\|y+z\|$$
whenever $y\in \overline{\rm span}\{e_\gamma\}_{\gamma<\Omega}$, $z\in \overline{\rm span}\{e_\gamma\}_{\Omega\leq\gamma<\Gamma}$, and $\Omega<\Gamma$ (and, in this case, the basis constant of $(e_\gamma)_{\gamma< \Gamma}$ is at most $C$).\smallskip

We are now in position to state the following folklore fact, based on Mazur's technique (compare with \cite[Corollary 4.11]{HMVZ}).

\begin{fact}\label{Fact: norming gives Schauder basis} Let $\{f_\gamma; \mu_\gamma\}_{\gamma<\omega_1}$ be a $\theta$-norming M-basis for a Banach space $\X$. Then there exists an uncountable subset $\Lambda$ of $\omega_1$ such that $\{f_\gamma\}_{\gamma\in \Lambda}$ is a long basic sequence in $\X$ (in the natural ordering induced on $\Lambda$ by $\omega_1$) with basis constant at most $1/\theta$.
\end{fact}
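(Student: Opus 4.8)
The statement is the folklore fact that a $\theta$-norming M-basis of length $\omega_1$ contains, after passing to an uncountable subset in the induced order, a long basic sequence with basis constant at most $1/\theta$. The plan is to run the classical Mazur selection procedure in the transfinite setting, using the characterisation of long basic sequences recalled just before the statement: it suffices to produce $\Lambda\in[\omega_1]^{\omega_1}$ so that, for every $\Omega<\omega_1$, every $y\in\overline{\rm span}\{f_\gamma\}_{\gamma\in\Lambda,\,\gamma<\Omega}$ and every $z\in\overline{\rm span}\{f_\gamma\}_{\gamma\in\Lambda,\,\gamma\geq\Omega}$ satisfy $\|y\|\leq C\|y+z\|$ with $C$ as close to $1/\theta$ as we wish (and then a standard reindexing-free argument shows that $1/\theta$ itself is attained, by taking $\e\to0$ or by a compactness/limit argument; alternatively one fixes $C=1/\theta'$ for arbitrary $\theta'<\theta$, which already suffices since $\theta$ itself may be diminished).

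\textbf{Key steps.} First I would set up the norming hypothesis in the convenient form: since $\{f_\gamma;\mu_\gamma\}_{\gamma<\omega_1}$ is $\theta$-norming, the subspace $N:=\overline{\rm span}\{\mu_\gamma\}_{\gamma<\omega_1}\subseteq\X^*$ is $\theta$-norming for $\X$; concretely, for every $x\in\X$ there is $\p\in N$, $\|\p\|\leq1$, with $|\langle\p,x\rangle|\geq\theta\|x\|-\e$, and such a $\p$ depends only on finitely many coordinates $\gamma$, i.e.\ $\p\in{\rm span}\{\mu_\gamma\}_{\gamma\in G}$ for a finite $G\subseteq\omega_1$. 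Next, the classical Mazur argument: given a separable subspace $Y\subseteq\X$, there is a finite set $\Phi\subseteq\{\mu_\gamma\}_{\gamma<\omega_1}$ that $(\theta-\e)$-norms a prescribed finite $\e$-net of $B_Y$, hence $(\theta-2\e)$-norms all of $Y$; crucially the coordinates involved in $\Phi$ form a finite subset of $\omega_1$. I would then build $\Lambda=\{\lambda_\xi:\xi<\omega_1\}$ by transfinite recursion together with an increasing sequence of countable ordinals $(\mu_\xi)_{\xi<\omega_1}$ bounding the "norming coordinates seen so far": at stage $\xi$, let $Y_\xi:=\overline{\rm span}\{f_{\lambda_\eta}:\eta<\xi\}$ (a separable space), choose via the Mazur step a finite set of functionals $(\theta-\e_\xi)$-norming $Y_\xi$ and supported on ordinals below some countable $\nu_\xi$, set $\mu_{\xi}:=\sup(\{\nu_\xi\}\cup\{\mu_\eta:\eta<\xi\}\cup\{\lambda_\eta+1:\eta<\xi\})+1<\omega_1$, and pick $\lambda_\xi\geq\mu_\xi$. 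Closing off at limit stages keeps everything countable, so the recursion runs through all of $\omega_1$ and $\Lambda$ is uncountable. Finally I would verify the basic-sequence inequality: fix $\Omega$, write $y\in\overline{\rm span}\{f_{\lambda_\eta}:\lambda_\eta<\Omega\}$; then $y\in Y_{\xi_0}$ for some $\xi_0<\omega_1$ chosen with $\lambda_{\xi_0}\geq\Omega$, and the functionals selected at stage $\xi_0$ are supported on ordinals $<\mu_{\xi_0}\leq\lambda_{\xi_0}$, hence below $\Omega$ is not quite right — rather, they lie in ${\rm span}\{\mu_\gamma:\gamma<\Omega'\}$ for some $\Omega'\leq\Omega$ — so each such functional $\p$ annihilates $f_{\lambda_\eta}$ for $\lambda_\eta\geq\Omega$ (because $\langle\mu_\gamma,f_{\lambda_\eta}\rangle=\delta_{\gamma,\lambda_\eta}=0$ when $\gamma<\Omega\leq\lambda_\eta$), whence $\langle\p,y+z\rangle=\langle\p,y\rangle$ for all $z\in\overline{\rm span}\{f_{\lambda_\eta}:\lambda_\eta\geq\Omega\}$; taking the sup over the chosen $\p$ gives $(\theta-\e_{\xi_0})\|y\|\leq|\langle\p,y\rangle|\leq\|y+z\|$. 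Letting the net of $\e_\xi$ tend to $0$ (or, cleaner, fixing in advance $\e_\xi=\e$ for a single $\e>0$ and then noting $\e$ was arbitrary) yields basis constant at most $1/\theta$.

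\textbf{Main obstacle.} The genuinely delicate point is the bookkeeping that guarantees the norming functionals selected at stage $\xi$ are supported strictly below $\lambda_\zeta$ for all later $\zeta$; this is what forces the inequality $\langle\p,y+z\rangle=\langle\p,y\rangle$ to go through for the "future" tail $z$, and it is exactly where the transfinite recursion (as opposed to the elementary $\omega$-length Mazur argument) does its work. One must be careful that at limit stages $\xi$ the space $Y_\xi$ is still separable and that $\sup_{\eta<\xi}\mu_\eta<\omega_1$, which holds since $\xi<\omega_1$. Everything else — the existence of finitely supported $(\theta-\e)$-norming functionals, the passage from an $\e$-net to all of $Y_\xi$, and the final sup estimate — is routine. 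I would also remark that the argument is insensitive to whether the scalar field is real or complex, and that $\Lambda$ inherits its order from $\omega_1$, so $\{f_\gamma\}_{\gamma\in\Lambda}$ is a long basic sequence in the natural ordering, as claimed.
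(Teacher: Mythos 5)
Your overall strategy is the same as the paper's: a Mazur-type selection of norming functionals for each separable initial span, a transfinite closing-off argument arranging that the coordinates of those functionals lie strictly below the next chosen index $\lambda_\xi$, and the observation that such functionals annihilate the tail, so that $\theta\|y\|\leq\sup|\langle\p,y\rangle|=\sup|\langle\p,y+z\rangle|\leq\|y+z\|$. The bookkeeping you describe (separable $Y_\xi$, supports below $\lambda_\xi$, closing off at limit stages) is exactly the right one.

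The genuine gap is in the constant. Because you insist on \emph{finite} sets of norming functionals obtained from $\e$-nets, each stage only $(\theta-\e_\xi)$-norms $Y_\xi$, and none of your proposed remedies removes the loss. For a fixed decomposition $y\in\overline{\rm span}\{f_{\lambda_\eta}\}_{\eta<\xi_0}$ and $z$ in the corresponding tail, the only functionals you are entitled to use are those selected at stage $\xi_0$ (functionals chosen at later stages need not annihilate $f_{\lambda_{\xi_0}}$), so the bound you obtain is $\|y\|\leq(\theta-\e_{\xi_0})^{-1}\|y+z\|$; taking the supremum over all $\xi_0$ yields basis constant at most $\sup_\xi(\theta-\e_\xi)^{-1}>1/\theta$, no matter how the $\e_\xi$ tend to $0$, since the early stages already carry a positive loss. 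Likewise, ``fixing $\theta'<\theta$'' proves only that for every $\e>0$ some uncountable $\Lambda_\e$ has basis constant at most $1/(\theta-\e)$, which is strictly weaker than the claim --- and the exact constant matters here, since the paper applies the Fact with $\theta=1$ to produce a \emph{monotone} long Schauder basis in Theorem \ref{Th: Xr with basis}. The fix is the one the paper uses: there is no need for the norming functionals of a separable subspace to be finitely many. For each $g_j$ in a dense sequence of $Y_\xi$ choose a sequence $(\mu^j_k)_{k<\omega}$ in the unit ball of ${\rm span}\{\mu_\gamma\}_{\gamma<\omega_1}$ with $\theta\|g_j\|\leq\sup_k|\langle\mu^j_k,g_j\rangle|$; the union of their (finite) supports is a \emph{countable} set $S_\xi\subseteq\omega_1$, still bounded in $\omega_1$, so the recursion goes through verbatim with $S_\xi<\lambda_\xi$ and the constant $\theta$ is preserved exactly, because $x\mapsto\sup\{|\langle\mu,x\rangle|\colon\mu\in{\rm span}\{\mu_\gamma\}_{\gamma\in S_\xi},\ \|\mu\|\leq1\}$ is a $1$-Lipschitz seminorm dominating $\theta\|\cdot\|$ on a dense subset of $Y_\xi$, hence on all of $Y_\xi$.
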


\begin{proof} If $\Omega$ is any countable subset of $\omega_1$, there is a countable subset $S_\Omega$ of $\omega_1$ such that
$$\theta \|x\|\leq \sup\left\{|\langle\mu,x \rangle|\colon \mu\in {\rm span}\{\mu_\gamma\} _{\gamma\in S_\Omega}, \|\mu\|\leq 1 \right\},$$
for every $x\in \overline{\rm span}\{f_\gamma\} _{\gamma\in\Omega}$. Indeed, let $(g_j)_{j<\omega}$ be a dense sequence in $\overline{\rm span}\{f_\gamma\} _{\gamma\in\Omega}$ and, for each $j<\omega$, find a sequence $(\mu_k^j)_{k<\omega}\in {\rm span}\{\mu_\gamma\}_{\gamma<\omega_1}$, $\|\mu_k^j\|\leq1$, such that 
$$\theta \|g_j\|\leq \sup_{k<\omega}|\langle\mu_k^j, g_j \rangle|.$$
Then, any countable set $S_\Omega\subseteq\omega_1$ such that $(\mu_k^j)_{k,j<\omega}\subseteq {\rm span}\{\mu_\gamma\} _{\gamma\in S_\Omega}$ is as desired.\smallskip

Therefore, a standard transfinite induction argument gives an uncountable subset $\Lambda=(\lambda_\xi)_{\xi<\omega_1}$ of $\omega_1$, where the ordinals $\lambda_\xi$ are enumerated in increasing order, and an increasing family $(S_\xi)_{\xi<\omega_1}$ of countable subsets of $\omega_1$ such that $S_\xi < \lambda_\xi$ and
$$\theta \|x\|\leq \sup\left\{|\langle\mu,x \rangle|\colon \mu\in {\rm span}\{\mu_\gamma\} _{\gamma\in S_\xi}, \|\mu\|\leq 1 \right\},$$
whenever $x\in \overline{\rm span}\{f_{\lambda_\gamma}\} _{\gamma<\xi}$.

Consequently, for every $\xi<\omega_1$ and every $x\in\overline{\rm span}\{f_{\lambda_\gamma}\}_{\gamma<\xi}$ and $y\in \overline{\rm span}\{f_{\lambda_\gamma}\}_{\xi\leq\gamma<\omega_1}$ we have $y\in \ker \mu_\gamma$ ($\gamma\in S_\xi$); hence,
\begin{eqnarray*}
\theta \|x\| &\leq& \sup\left\{|\langle\mu,x \rangle|\colon \mu\in {\rm span}\{\mu_\gamma\} _{\gamma\in S_\xi}, \|\mu\|\leq 1 \right\} \\&=&  \sup\left\{|\langle\mu,x+y \rangle|\colon \mu\in {\rm span}\{\mu_\gamma\} _{\gamma\in S_\xi}, \|\mu\|\leq 1 \right\} \\
&\leq& \|x+y\|.
\end{eqnarray*}
By the characterisation mentioned above, we derive that $(f_\gamma)_{\gamma\in\Lambda}$ is the desired long basic sequence with basis constant at most $1/\theta$.
\end{proof}

\begin{remark} We stated the result for M-bases of length $\omega_1$ since we shall only need this particular case; however, a standard modification of the argument also proves the following more general facts. If $\Gamma$ is an uncountable ordinal and $\{f_\gamma; \mu_\gamma\}_{\gamma< \Gamma}$ is a $\theta$-norming M-basis for $\X$, there exist a set $\Lambda\subseteq\Gamma$ with $|\Lambda|=|\Gamma|$ and a well ordering $\preccurlyeq$ of $\Lambda$ such that $(f_\gamma)_{\gamma\in \Lambda}$ is a long basic sequence in $\X$ in the $\preccurlyeq$ ordering. In case $\Gamma$ is a regular cardinal, the ordering can be chosen to be the one induced by $\Gamma$. Finally, if $\Gamma$ is countable, for every $\e>0$, there exists an increasing sequence $(\gamma_j)_{j<\omega}$ in $\Gamma$ such that $(f_{\gamma_j})_{j<\omega}$ is a basic sequence with basis constant at most $1/\theta +\e$.
\end{remark}

Next, we shall observe the obvious fact that passing to a subset of the index set of a norming M-basis produces a norming M-basis for the corresponding subspace.
\begin{fact}\label{Fact: norming remains to subsets} Let $\{f_\gamma; \mu_\gamma\}_{\gamma<\Gamma}$ be a $\theta$-norming M-basis for a Banach space $\X$ and let $\Lambda\subseteq \Gamma$. Set $\Xl:=\overline{\rm span}\{f_\gamma \}_{\gamma\in\Lambda}$. Then $\{f_\gamma; \mu_\gamma\cut_\Xl\} _{\gamma\in\Lambda}$ is a $\theta$-norming M-basis for $\Xl$.
\end{fact}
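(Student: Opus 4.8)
The plan is to verify the three requirements — biorthogonality, fundamentality, and the $\theta$-norming inequality — in turn, each of which is essentially a formality once one unwinds the definitions. First, the biorthogonality of $\{f_\gamma;\mu_\gamma\cut_\Xl\}_{\gamma\in\Lambda}$ is inherited directly: for $\alpha,\beta\in\Lambda$ we have $\langle\mu_\alpha\cut_\Xl,f_\beta\rangle=\langle\mu_\alpha,f_\beta\rangle=\delta_{\alpha,\beta}$, since $f_\beta\in\Xl$ and restriction of a functional does not change its action on $\Xl$. Fundamentality is immediate from the very definition $\Xl=\overline{\rm span}\{f_\gamma\}_{\gamma\in\Lambda}$, so $\{f_\gamma\}_{\gamma\in\Lambda}$ is linearly dense in $\Xl$. (Totality of the restricted system will follow from the norming inequality, exactly as in the definition of a norming M-basis, since a $\theta$-norming subspace is in particular $w^*$-dense.)

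The one point that requires a word is the norming inequality. Fix $x\in\Xl\subseteq\X$. Applying the hypothesis that $\{f_\gamma;\mu_\gamma\}_{\gamma<\Gamma}$ is $\theta$-norming for $\X$, we get
$$\theta\|x\|\leq \sup\big\{|\langle\mu,x\rangle|\colon \mu\in {\rm span}\{\mu_\gamma\}_{\gamma<\Gamma},\ \|\mu\|\leq1\big\}.$$
Now I would observe that if $\mu=\sum_{i=1}^n c_i\mu_{\gamma_i}$ is any such functional, then its restriction to $\Xl$ is unaffected by the terms with $\gamma_i\notin\Lambda$: indeed $\langle\mu_{\gamma_i}\cut_\Xl,f_\beta\rangle=\delta_{\gamma_i,\beta}=0$ for every $\beta\in\Lambda$ when $\gamma_i\notin\Lambda$, hence $\mu_{\gamma_i}$ vanishes on the linearly dense set $\{f_\beta\}_{\beta\in\Lambda}$, so $\mu_{\gamma_i}\cut_\Xl=0$. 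Therefore $\mu\cut_\Xl=\sum_{\gamma_i\in\Lambda}c_i\,\mu_{\gamma_i}\cut_\Xl\in {\rm span}\{\mu_\gamma\cut_\Xl\}_{\gamma\in\Lambda}$, and of course $\|\mu\cut_\Xl\|\leq\|\mu\|\leq1$ while $\langle\mu\cut_\Xl,x\rangle=\langle\mu,x\rangle$ because $x\in\Xl$. Consequently the supremum above does not increase when we replace $\mu$ by $\mu\cut_\Xl$, giving
$$\theta\|x\|\leq \sup\big\{|\langle\nu,x\rangle|\colon \nu\in {\rm span}\{\mu_\gamma\cut_\Xl\}_{\gamma\in\Lambda},\ \|\nu\|\leq1\big\},$$
which is exactly the assertion that $\{f_\gamma;\mu_\gamma\cut_\Xl\}_{\gamma\in\Lambda}$ is $\theta$-norming for $\Xl$.

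There is no real obstacle here; the only thing to be careful about is the elementary but easy-to-overlook point that $\mu_\gamma\cut_\Xl=0$ for $\gamma\notin\Lambda$, so that restricting a functional from ${\rm span}\{\mu_\gamma\}_{\gamma<\Gamma}$ lands in ${\rm span}\{\mu_\gamma\cut_\Xl\}_{\gamma\in\Lambda}$ rather than in some larger space. Everything else — that restriction is norm-nonincreasing, that it preserves the pairing with elements of $\Xl$, and that the supremum of a subset is at most the supremum of the larger set — is standard.
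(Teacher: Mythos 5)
Your proof is correct and follows essentially the same route as the paper: for $x\in\Xl$ one has $\langle\mu,x\rangle=\langle\mu\cut_\Xl,x\rangle$, and $\mu\cut_\Xl=\sum_{\gamma\in\Lambda}a_\gamma\mu_\gamma\cut_\Xl$ because $\mu_\gamma\cut_\Xl=0$ for $\gamma\notin\Lambda$, so the supremum defining the norming constant is attained over ${\rm span}\{\mu_\gamma\cut_\Xl\}_{\gamma\in\Lambda}$. Your explicit justification that $\mu_\gamma$ vanishes on the linearly dense set $\{f_\beta\}_{\beta\in\Lambda}$ is the one detail the paper leaves implicit, and you correctly only use the inequality between the two suprema, which is all that is needed.
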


\begin{proof} Obviously, $\{f_\gamma; \mu_\gamma\cut_\Xl\} _{\gamma\in\Lambda}$ is a fundamental biorthogonal system for $\Xl$. If $\mu\in {\rm span}\{\mu_\gamma\}_{\gamma<\Gamma}$, write $\mu=\sum_{\gamma<\Gamma} a_\gamma\mu_\gamma$, where only finitely many scalars $a_\gamma$ are non-zero. Then
$$\mu\cut_\Xl=\sum_{\gamma\in\Lambda}a_\gamma\mu_\gamma\cut_\Xl.$$
Therefore, when $x\in\Xl$, we have
\begin{eqnarray*} \theta\|x\| &\leq& \sup\left\{|\langle\mu\cut_\Xl,x\rangle|\colon \mu\in {\rm span}\{\mu_\gamma\}_{\gamma<\Gamma}, \|\mu\|\leq1 \right\} \\
&=& \sup\left\{|\langle\mu,x\rangle|\colon \mu\in {\rm span}\{\mu_\gamma\cut_\Xl\}_{\gamma\in\Lambda}, \|\mu\|\leq1 \right\}.
\end{eqnarray*}
Thus, ${\rm span}\{\mu_\gamma\cut_\Xl\}_{\gamma\in\Lambda}$ is $\theta$-norming for $\Xl$, as desired.
\end{proof}

We are now ready to state and prove the announced result.
\begin{theorem}\label{Th: Xr with basis} There exists an Asplund space $\X$ with a $1$-norming Auerbach basis $\{f_\gamma;\mu_\gamma\}_ {\gamma<\omega_1}$ such that $(f_\gamma)_{\gamma<\omega_1}$ is a long monotone Schauder basis and yet $\X$ is not WCG.
\end{theorem}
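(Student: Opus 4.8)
The plan is to take $\X$ to be the subspace $\Xl:=\overline{\rm span}\{f_\gamma\}_{\gamma\in\Lambda}$ of $\Xr$, for a well-chosen uncountable $\Lambda\subseteq\omega_1$, as anticipated at the start of this subsection. First, I would apply Fact~\ref{Fact: norming gives Schauder basis} with $\theta=1$ to the $1$-norming M-basis $\{f_\gamma;\mu_\gamma\cut_\Xr\}_{\gamma<\omega_1}$ of $\Xr$ constructed in Section~\ref{Sec: Proof Th A}: this yields an uncountable $\Lambda\subseteq\omega_1$ such that $(f_\gamma)_{\gamma\in\Lambda}$, in the ordering induced by $\omega_1$, is a long basic sequence with basis constant at most $1$; since a basis constant is always at least $1$, the sequence is in fact monotone. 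Set $\X:=\Xl$. Then $(f_\gamma)_{\gamma\in\Lambda}$ is fundamental in $\X$ by definition and long basic, hence a long monotone Schauder basis of $\X$; as $\Lambda$ has order type $\omega_1$, an order isomorphism lets us re-index everything by $\omega_1$ and obtain a system $\{f_\gamma;\mu_\gamma\}_{\gamma<\omega_1}$ of the required shape. By Fact~\ref{Fact: norming remains to subsets}, $\{f_\gamma;\mu_\gamma\cut_\X\}_{\gamma\in\Lambda}$ is a $1$-norming M-basis for $\X$, and it is an Auerbach basis, since $\|f_\gamma\|=1$ (the function $f_\gamma$ is $\{0,1\}$-valued and $f_\gamma(\{\gamma\})=1$, using Theorem~\ref{MTh: Kr}(i)) and $1=\langle\mu_\gamma\cut_\X,f_\gamma\rangle\le\|\mu_\gamma\cut_\X\|\le\|\delta_{\{\gamma\}}\|_{\M(\Kr)}=1$. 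Moreover $\X$ is Asplund, being a subspace of $\C(\Kr)$ with $\Kr$ scattered by Theorem~\ref{MTh: Kr}(iv).

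The only point requiring genuine care is the verification that $\X$ is not WCG, which I would carry out by showing that $(B_{\X^*},w^*)$ is not Corson. One cannot directly restrict the embedding $\beta\mapsto[0,\beta)$ of $[0,\omega_1]$ into $\Kr$ used in Claim~\ref{claim: not WLD}, because the functionals $\{f_\gamma\}_{\gamma\in\Lambda}$ fail to separate $[0,\alpha)$ from $[0,\beta)$ whenever $\Lambda\cap[\alpha,\beta)=\emptyset$. Instead, consider the continuous surjection $\pi_\Lambda\colon\Kr\to\K_\Lambda$, $A\mapsto A\cap\Lambda$, onto the compactum $\K_\Lambda:=\{A\cap\Lambda\colon A\in\Kr\}\subseteq\P(\Lambda)$. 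As in Claim~\ref{claim: not WLD}, the map $e_\Lambda\colon\Kr\to(B_{\X^*},w^*)$, $A\mapsto\delta_A\cut_\X$, is continuous; and since $\{f_\gamma\}_{\gamma\in\Lambda}$ is linearly dense in $\X$ and $\langle e_\Lambda(A),f_\gamma\rangle=f_\gamma(A)=1$ iff $\gamma\in A$ (for $\gamma\in\Lambda$), one has $e_\Lambda(A)=e_\Lambda(B)$ precisely when $A\cap\Lambda=B\cap\Lambda$. Hence $e_\Lambda$ factors through $\pi_\Lambda$ as an injective continuous map $\widetilde e_\Lambda\colon\K_\Lambda\to(B_{\X^*},w^*)$, which is therefore a topological embedding.

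It then remains to exhibit a copy of $[0,\omega_1]$ inside $\K_\Lambda$. Enumerate $\Lambda=\{\lambda_\xi\}_{\xi<\omega_1}$ in increasing order and define $j\colon[0,\omega_1]\to\P(\Lambda)$ by $j(\zeta)=\{\lambda_\eta\colon\eta<\zeta\}$. A direct check (each coordinate $\zeta\mapsto 1_{j(\zeta)}(\lambda_\eta)$ is the indicator of the clopen set $(\eta,\omega_1]$) shows that $j$ is a continuous injection; moreover $j(\zeta)=[0,\lambda_\zeta)\cap\Lambda\in\K_\Lambda$ for $\zeta<\omega_1$ and $j(\omega_1)=[0,\omega_1)\cap\Lambda\in\K_\Lambda$, using Theorem~\ref{MTh: Kr}(ii). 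Thus $j$ embeds $[0,\omega_1]$ into $\K_\Lambda$, and composing with $\widetilde e_\Lambda$ embeds $[0,\omega_1]$ into $(B_{\X^*},w^*)$. Consequently $(B_{\X^*},w^*)$ is not Corson, hence not Eberlein, and therefore $\X$ is not WCG (one could equally invoke Theorem~\ref{Th: shrinking M-basis}, since $\X$ is Asplund). The main obstacle, as indicated, is precisely isolating the compactum $\K_\Lambda=\pi_\Lambda(\Kr)$ and recognising that the initial segments of $\Lambda$ — not of $\omega_1$ — form the copy of $[0,\omega_1]$ that survives the passage from $\Kr$ to its projection; everything else is routine bookkeeping on top of the facts already established in this section.
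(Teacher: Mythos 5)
Your proposal is correct and follows essentially the same route as the paper: Fact~\ref{Fact: norming gives Schauder basis} and Fact~\ref{Fact: norming remains to subsets} give the monotone long Schauder basis and the $1$-norming Auerbach basis for $\Xl$, and the non-WCG part is witnessed by the very same embedding $\zeta\mapsto\delta_{[0,\lambda_\zeta)}\cut_\Xl$ of $[0,\omega_1]$ into $(B_{\Xl^*},w^*)$. The only difference is presentational: the paper checks injectivity and continuity of this map directly (reparametrising $\pi(\alpha)=\delta_{[0,\alpha)}\cut_\Xl$ along the enumeration of $\Lambda$ and verifying the limit cases by hand), whereas you factor it through the projected compactum $\K_\Lambda=\{A\cap\Lambda\colon A\in\Kr\}$, which packages those verifications somewhat more cleanly but yields the identical embedding.
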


\begin{proof} Consider the Banach space $\Xr$ with $1$-norming M-basis $\{f_\gamma;\mu_\gamma\}_ {\gamma<\omega_1}$ constructed in Section \ref{Sec: Proof Th A}. Recall that $\|f_\gamma\|=\|\mu_\gamma\|=1$ ($\gamma<\omega_1$), see Remark \ref{Rmk: M-basis is Auerbach}. In the light of Fact \ref{Fact: norming gives Schauder basis}, there exists an uncountable subset $\Lambda$ of $\omega_1$ such that $(f_\gamma)_ {\gamma\in\Lambda}$ is a monotone long Schauder basis for $\Xl:=\overline{\rm span}\{f_\gamma\}_{\gamma\in \Lambda}$. Moreover, Fact \ref{Fact: norming remains to subsets} yields that $\Xl$ admits a $1$-norming Auerbach basis, given by $\{f_\gamma; \mu_\gamma\cut_\Xl\} _{\gamma\in\Lambda}$. Since $\Xl\subseteq \Xr$, it is also clear that $\Xl$ is Asplund. Consequently, it only remains to prove that the space $\X_{\Lambda}$ is not WCG. 

In order to achieve that, we shall show that $[0,\omega_1]$ embeds into $(B_{\Xl^*},w^*)$; hence, $(B_{\Xl^*},w^*)$ is not Corson and $\Xl$ is not WLD. Let us enumerate $\Lambda$ as an increasing transfinite sequence $(\lambda_{\xi})_{\xi< \omega_1}$. We then consider the continuous function $\pi\colon[0,\omega_1]\to (B_{\Xl^*},w^*)$ defined by $\pi(\alpha):=\delta_{[0, \alpha)}\cut_\Xl$; observe that for $\alpha<\omega_1$ and $\lambda\in\Lambda$ we have $\langle \pi(\alpha), f_{\lambda}\rangle=\langle \delta_{[0,\alpha)}\cut_\Xl, f_{\lambda}\rangle= f_{\lambda}([0,\alpha))$. Then, using the density of ${\rm span}\{f_{\lambda}\}_{\lambda\in\Lambda}$ in $\X_{\Lambda}$, we obtain:
\begin{romanenumerate}
    \item $\pi(\alpha)=\pi(\lambda_{\xi+1})$ if $\alpha\in (\lambda_{\xi},\lambda_{\xi+1}]$;
    \item $\pi(\alpha)=\pi(\lambda_0)$ if $\alpha\in[0,\lambda_0]$.
    \item\label{Eq:  limit} $\pi(\alpha)=\pi(\lambda_\xi)$ if $\xi$ is a limit ordinal and $\alpha\in[\sup_{\beta<\xi}\lambda_{\beta},\lambda_{\xi}]$.
\end{romanenumerate}
We observe that, if $\xi_1< \xi_2<\omega_1$, then $\langle \pi(\lambda_{\xi_1}), f_{\lambda_{\xi_1}}\rangle= f_{\lambda_{\xi_1}}([0, \lambda_{\xi_1}))=0$, while $\langle \pi(\lambda_{\xi_2}), f_{\lambda_{\xi_1}}\rangle= f_{\lambda_{\xi_1}}([0, \lambda_{\xi_2}))=1$. Therefore, $\pi(\lambda_{\xi_1})\neq \pi(\lambda_{\xi_2})$.  Consequently, the map $h\colon [0,\omega_1]\to (B_{\Xl^*},w^*)$ defined by 
$$h(\xi)=\begin{cases}
\pi(\lambda_{\xi}), & \xi<\omega_1,\\
\pi(\omega_1), & \xi=\omega_1,
\end{cases}$$
is a injection into $(B_{\Xl^*},w^*)$. Let us show that $h$ is also continuous. Indeed, let $\gamma\in [0,\omega_1]$ be a limit ordinal and let $\{\gamma_{\eta}\}$ be a net converging to $\gamma=\sup \gamma_{\eta}$. In case $\gamma<\omega_1$, then by \eqref{Eq:  limit} we have $\pi(\sup \lambda_{\gamma_{\eta}})=\pi(\lambda_{\gamma})$, which, by the continuity of $\pi$, yields $\lim h(\gamma_{\eta})=h(\gamma)$. On the other hand, when $\gamma=\omega_1$, the continuity of the map $\pi$ ensures that $\lim h(\gamma_\eta)=h(\omega_1)$. Therefore $[0,\omega_1]$ embeds into $(B_{\Xl^*},w^*)$.
\end{proof}

Answering a question of Argyros, it was proved in \cite{LT} that there exists a $c_0$-saturated, non-separable Banach space $\mathfrak{X}$ that contains no unconditional long basic sequence. In particular, every infinite-dimensional subspace of $\mathfrak{X}$ contains an unconditional basic sequence. Thus, the Banach space $\mathfrak{X}$ exhibits a radical discrepancy between the behaviour of separable and non-separable subspaces. The argument in \cite{LT} also heavily uses the machinery of $\ro$-functions---differently from how it is done in our proof---combined with techniques originating from Schlumprecht's construction of an arbitrary distortable Banach space, \cite{S}. Let us also refer to \cite{ALT}, \cite[Chapter~A.6]{AT}, and \cite[\S~3.5]{T} for a related construction.\smallskip

We do not know if the Banach space $\Xr$ is also a solution to Argyros' question, namely, we do not know if $\Xr$ contains unconditional long basic sequences. (Notice that $\Xr$ is $c_0$-saturated, being a subspace of $\C(\Kr)$, where $\Kr$ is scattered; see, \emph{e.g.}, \cite[Theorem~14.26]{FHHMZ}.) In particular, we do not know if $c_0(\omega_1)$ embeds in $\Xr$. However, we shall show the weaker fact that no uncountable subset of the vectors of the M-basis $\{f_\gamma;\mu_\gamma\}_ {\gamma<\omega_1}$ can be an unconditional long basic sequence in $\Xr$.

\begin{proposition} Let $\{f_\gamma;\mu_\gamma\}_{\gamma< \omega_1}$ be the $1$-norming M-basis for the Banach space $\Xr$. If $\Lambda\subseteq \omega_1$ is uncountable, then $(f_\gamma)_{\gamma\in\Lambda}$ is not an unconditional long basic sequence.
\end{proposition}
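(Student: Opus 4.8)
The plan is to argue by contradiction and reduce the statement to the elementary fact that \emph{an Asplund Banach space with an unconditional long Schauder basis is WLD} (indeed, WCG). So assume that $\Lambda\subseteq\omega_1$ is uncountable and that $(f_\gamma)_{\gamma\in\Lambda}$ is an unconditional long basic sequence; set $\Xl:=\overline{\rm span}\{f_\gamma\}_{\gamma\in\Lambda}$, so that $(f_\gamma)_{\gamma\in\Lambda}$ is an unconditional long Schauder basis of $\Xl$, whose coordinate functionals are easily identified with $f_\gamma^*:=\mu_\gamma\cut_{\Xl}$ (they agree with $\mu_\gamma\cut_{\Xl}$ on the dense subspace ${\rm span}\{f_\alpha\}_{\alpha\in\Lambda}$). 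Since $\Xl$ is a subspace of $\Xr$, it is Asplund; granting the fact above, $\Xl$ would be WLD. On the other hand, enumerating $\Lambda=(\lambda_\xi)_{\xi<\omega_1}$ in increasing order, the very argument used in the proof of Theorem~\ref{Th: Xr with basis} (which uses nothing about $(f_\gamma)_{\gamma\in\Lambda}$ except that $\Lambda$ is uncountable and ${\rm span}\{f_\gamma\}_{\gamma\in\Lambda}$ is dense in $\Xl$) shows that $\xi\mapsto\delta_{[0,\lambda_\xi)}\cut_{\Xl}$ ($\xi<\omega_1$), together with $\omega_1\mapsto\delta_{[0,\omega_1)}\cut_{\Xl}$, is a homeomorphic embedding of $[0,\omega_1]$ into $(B_{\Xl^*},w^*)$. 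As $[0,\omega_1]$ is not Corson, $\Xl$ is not WLD -- a contradiction.

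It remains to justify the fact. First, $\Xl$ contains no isomorphic copy of $\ell_1$: otherwise it would have a separable subspace with non-separable dual, contradicting that $\Xl$ is Asplund. Next, I claim that the unconditional basis $(f_\gamma)_{\gamma\in\Lambda}$ is shrinking, i.e. ${\rm span}\{f_\gamma^*\}_{\gamma\in\Lambda}$ is norm-dense in $\Xl^*$; this is the long-basis form of James' theorem, and I will include its (short) proof as I am not aware of a precise reference in the uncountable setting. If the basis were not shrinking, pick $\psi\in\Xl^*$ with ${\rm dist}(\psi,\overline{\rm span}\{f_\gamma^*\}_{\gamma\in\Lambda})=\delta>0$; using that the canonical projections $P_F$ ($F\subseteq\Lambda$ finite) are uniformly bounded (by unconditionality) and that $P_F^*\psi\in{\rm span}\{f_\gamma^*\}_{\gamma\in F}$, a routine gliding-hump construction yields finitely supported vectors $y_n$ with pairwise disjoint supports, $\|y_n\|\le2$ and $|\langle\psi,y_n\rangle|\ge c$ for a fixed $c>0$. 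Being a block sequence of the unconditional basis $(f_\gamma)_{\gamma\in\Lambda}$, the sequence $(y_n)_{n<\omega}$ is unconditional, so $\|\sum_n a_ny_n\|\ge\frac1K\|\sum_n|a_n|y_n\|\ge\frac{c}{K\|\psi\|}\sum_n|a_n|$ for all finitely supported scalars $(a_n)_n$ (here $K$ is the unconditional constant), while $\|\sum_n a_ny_n\|\le2\sum_n|a_n|$; hence $\overline{\rm span}\{y_n\}_{n<\omega}\cong\ell_1$, a contradiction. Thus $\{f_\gamma;f_\gamma^*\}_{\gamma\in\Lambda}$ is a shrinking M-basis of $\Xl$, and Theorem~\ref{Th: shrinking M-basis} (the implication (i)$\Rightarrow$(iii)) gives that $\Xl$ is WLD.

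The only point calling for a little care is the long-basis version of James' theorem; but the gliding-hump argument above localises everything to a single countable block sequence, so it goes through verbatim. Everything else -- the identification of the coordinate functionals, the inheritance of the Asplund property by $\Xl$, and the re-use of the embedding $[0,\omega_1]\hookrightarrow(B_{\Xl^*},w^*)$ from the proof of Theorem~\ref{Th: Xr with basis} -- is immediate, and the whole argument should take well under a page.
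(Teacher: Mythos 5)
Your argument is correct and is essentially the same as the paper's: assume unconditionality, deduce that the long basis of the Asplund space $\Xl$ is shrinking (the James-type criterion), conclude that $\Xl$ is WLD/WCG, and contradict the embedding of $[0,\omega_1]$ into $(B_{\Xl^*},w^*)$ established in the proof of Theorem~\ref{Th: Xr with basis}. The only difference is that the paper simply cites the long-basis form of James' theorem (\cite{J}, \cite[Theorem~7.39]{HMVZ}) where you supply the gliding-hump proof.
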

\begin{proof} Towards a contradiction, assume that there exists an uncountable subset $\Lambda$ of $\omega_1$ such that $(f_\gamma)_{\gamma\in\Lambda}$ is unconditional. Then, the Asplund space $\Xl:=\overline{\rm span}\{f_\gamma\}_{\gamma\in \Lambda}$ admits a long unconditional basis, whence it follows that $\{f_\gamma; \mu_\gamma\cut_\Xl\}_{\gamma\in\Lambda}$ is a shrinking M-basis (\cite{J}, \emph{e.g.}, \cite[Theorem 7.39]{HMVZ}). Consequently, it follows that $\Xl$ is WCG. However, this is not the case, as we saw in the proof of Theorem \ref{Th: Xr with basis}.
\end{proof}

\subsection{Weak P-points in semi-Eberlein compacta}\label{Sec: semi-Eberlein}
In this part we shall observe that the compact space constructed in Theorem \ref{MTh: Kr} also provides an interesting example in the theory of semi-Eberlein compact spaces. Semi-Eberlein compacta were introduced by Kubi\'s and Leiderman in \cite{KL}, as a natural weakening of the definition of Eberlein compact; further results on semi-Eberlein compacta can be found in the recent papers \cite{CCS} and \cite{CRS}. More precisely, the definition of a semi-Eberlein compact space originates from the definition of Eberlein compact by the same generalisation that leads to Valdivia compacta from Corson ones. The formal definition reads as follows.

\begin{definition} A compact space is \emph{semi-Eberlein} if it is homeomorphic to a compact space $\K \subseteq [0,1]^\Gamma$ such that $\K \cap c_0(\Gamma)$ is dense in $\K$.
\end{definition}

\begin{multicols}{2}\begin{center}
    A compact space is ...
    
    \columnbreak
    if it is homeomorphic to\\ $\K\subseteq [0,1]^\Gamma$ such that ...
    \end{center}
\end{multicols}
\vspace{-3em}
\begin{multicols}{2}\begin{center}
    $${\small \xymatrix{ \text{ Valdivia } & \text{ semi-Eberlein } \ar@{_{(}->}[l]\\
    \text{ Corson } \ar@{^{(}->}[u] & \text{ Eberlein } \ar@{^{(}->}[u]\ar@{_{(}->}[l] }}$$
    \vspace{.3em}
    
    \columnbreak
    $${\small \xymatrix{ \underset{\text{is dense in }\K}{\K\cap \Sigma(\Gamma)}  & \; \underset{\text{is dense in }\K}{\K\cap c_0(\Gamma)} \ar@{_{(}->}[l] \\ 
    \K\subseteq \Sigma(\Gamma) \ar@{^{(}->}[u] & \;\K\subseteq c_0(\Gamma) \ar@{^{(}->}[u]\ar@{_{(}->}[l] }}$$
    \end{center}
\end{multicols}

Obviously, every Eberlein compact is semi-Eberlein and every semi-Eberlein is Valdivia. The Tikhonov cube $[0,1]^{\omega_1}$ (or, more generally, every non Corson adequate compact, see Section \ref{S: adequate} below) is a typical example of a semi-Eberlein compact that is not Corson (and, in particular, not Eberlein). In order to offer an example of a Valdivia compact that is not semi-Eberlein, we need to recall the following notion.

\begin{definition}[\cite{GH}, \cite{Ku}] A point $p$ in a topological space $X$ is a \emph{P-point} if $p$ is not isolated and for every countable family $(U_j)_{j<\omega}$ of neighbourhoods of $p$, $\cap_{j<\omega}U_j$ is a neighbourhood of $p$. A point $p\in X$ is said a \textit{weak P-point} if $p$ is not isolated and it is limit point of no countable set in $X\setminus \{p\}$.
\end{definition}

An important result due to Kubi\'s and Leiderman (\cite[Theorem 4.2]{KL}) is the fact that semi-Eberlein compacta do not admit P-points. Since perhaps the simplest example of a P-point is the point $\omega_1$ in the compact space $[0,\omega_1]$, it follows that $[0,\omega_1]$ is not semi-Eberlein. The said result, combined with a forcing argument, also yields that the Corson compact constructed in \cite{T Corson} is not semi-Eberlein, \cite[Example 5.5]{KL}. These results motivated the question whether semi-Eberlein compacta can admit weak P-points, \cite[Question 6.1]{KL}. It is fairly easy to see that Theorem \ref{MTh: Kr} also provides a positive answer to the above question.

\begin{proposition} Let $\Kr$ be any compact space as in Theorem \ref{MTh: Kr}. Then $\Kr$ is a semi-Eberlein compact space and it admits a weak P-point.
\end{proposition}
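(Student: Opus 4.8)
The plan is to verify the two claims in turn; both follow quickly from the properties of $\Kr$ recorded in Theorem \ref{MTh: Kr}, the decisive observation being that every element of $\Fr$ is a \emph{finite} subset of $\omega_1$.

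For the semi-Eberlein property I would work with the inclusion $\Kr\subseteq\P(\omega_1)=\{0,1\}^{\omega_1}\subseteq[0,1]^{\omega_1}$ already fixed in the paper. Under the identification of $\P(\omega_1)$ with $\{0,1\}^{\omega_1}$, a set $A\subseteq\omega_1$ lies in $c_0(\omega_1)$ exactly when $A$ is finite, so $\{0,1\}^{\omega_1}\cap c_0(\omega_1)=[\omega_1]^{<\omega}$. Since $\Fr\subseteq[\omega_1]^{<\omega}$ we get $\Fr\subseteq\Kr\cap c_0(\omega_1)$, and $\Fr$ is dense in $\Kr=\overline{\Fr}$ by construction; hence $\Kr\cap c_0(\omega_1)$ is dense in $\Kr$ and $\Kr$ is semi-Eberlein. (This sharpens the remark made right after Theorem \ref{MTh: Kr} that $\Kr$ is Valdivia.)

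For the weak P-point I would propose the point $p:=[0,\omega_1)$, which lies in $\Kr$ by property (ii). First, $p$ is not isolated: a basic neighbourhood of $p$ in $\Kr$ can only prescribe finitely many coordinates, all of them to the value $1$ (since every coordinate of $p$ equals $1$), hence it is of the form $\{A\in\Kr\colon \alpha_1,\dots,\alpha_n\in A\}$ and therefore contains $[0,\gamma)$ for every $\gamma>\max\{\alpha_1,\dots,\alpha_n\}$; by (ii) these are uncountably many distinct points of $\Kr$. Second, I claim $p$ is a limit point of no countable $S=\{A_k\colon k<\omega\}\subseteq\Kr\setminus\{p\}$. To each $A_k$ I associate a countable ordinal $\gamma_k\notin A_k$: if $A_k$ is infinite then, by property (iii), $A_k=[0,\alpha_k)$ with $\alpha_k<\omega_1$ (as $A_k\neq p$), and I set $\gamma_k:=\alpha_k$; if $A_k$ is finite I set $\gamma_k$ to be any ordinal strictly above $\sup A_k$. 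Because $\cf(\omega_1)>\omega$, the ordinal $\delta:=\sup_{k<\omega}\gamma_k$ satisfies $\delta<\omega_1$, and $\delta\notin A_k$ for every $k$ (for the initial segments because $\delta\geq\alpha_k$, for the finite sets because $\delta>\sup A_k$). Then $\{A\in\Kr\colon\delta\in A\}$ is a neighbourhood of $p$ disjoint from $S$, so $p\notin\overline S$. Thus $p$ is a weak P-point.

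I do not anticipate a serious obstacle: the only load-bearing ingredient is property (iii), which forces the infinite members of any countable $S$ to be initial segments and thereby makes the countable-supremum argument succeed; everything else is routine manipulation of basic clopen sets in $\{0,1\}^{\omega_1}$. It seems worth adding that, since $\Kr$ is semi-Eberlein, it has no P-points by \cite[Theorem~4.2]{KL}; hence $p=[0,\omega_1)$ is a weak P-point that is \emph{not} a P-point, which is precisely the configuration asked for in \cite[Question~6.1]{KL}.
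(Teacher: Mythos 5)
Your proof is correct and follows essentially the same route as the paper: the semi-Eberlein part is witnessed by the dense set $\Fr$ of finite sets exactly as in the text, and your explicit construction of a coordinate $\delta<\omega_1$ avoiding a given countable $S\subseteq\Kr\setminus\{[0,\omega_1)\}$ is just an unpacking of the paper's one-line appeal to the fact that $\Kr\setminus\{[0,\omega_1)\}\subseteq\Sigma(\omega_1)$ is countably closed. The only (harmless) difference is that you also spell out the non-isolatedness of $[0,\omega_1)$ and the resulting answer to \cite[Question~6.1]{KL}, which the paper leaves implicit.
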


\begin{proof} $\Kr$ is semi-Eberlein, as witnessed by the dense subset $\Fr$, that consists of finite subsets of $\omega_1$. Moreover, $[0,\omega_1)$ is a weak P-point in $\Kr$. Indeed, since every set in $\Kr\setminus\{[0,\omega_1)\}$ is countable, it follows that $\Kr \setminus\{[0,\omega_1)\}= \Kr\cap\Sigma(\Kr)$ is countably closed.
\end{proof}

\section{Adequate compacta and norming M-bases}\label{S: adequate}
In conclusion to our paper, we shall briefly discuss the main problem in the case of a $\C(\K)$ space, where $\K$ is an adequate compact. We note the easy fact that every scattered adequate compact is Eberlein, which, in particular, gives a positive answer to Godefroy's question in the realm of $\C(\K)$ spaces, $\K$ adequate. We also show that $\C(\K)$ has a $1$-norming M-basis, whenever $\K$ is adequate.

Recall that, given a set $S$, a family $\A\subseteq\P(S)$ is  \emph{adequate} \cite{Talagrand} if:
\begin{romanenumerate}
\item $\{x\}\in \A$ for each $x\in S$;
\item if $A\in \A$ and $B\subseteq A$, then $B\in \A$;
\item if $B\subseteq S$ is such that all finite subsets of $B$ belong to $\A$, then $B\in\A$.
\end{romanenumerate}

Every adequate family $\A\subseteq\P(S)$ is a closed subset of $\P(S)$. When an adequate family is considered as a compact space, it is customary to denote it $\KA$ and call it an \emph{adequate compact}.

Let $\KA$ be a scattered adequate compact; in order to see that $\KA$ is Eberlein, we just need to show that every $A\in\A$ is a finite set. If this is not the case and $A\in\A$ is infinite, then, by (ii), $\P(A)\subseteq\A$. However, $\P(A)$ is perfect, a contradiction. In other words, an adequate compact is scattered if and only if it is strong Eberlein (\emph{cf}. \cite[Lemma 2.53]{HMVZ}).

\begin{theorem}\label{t: adequate implies 1-norming} Let $\KA$ be an adequate compact. Then $\C(\KA)$ has a 1-norming M-basis.
\end{theorem}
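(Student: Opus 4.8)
We want a $1$-norming M-basis for $\C(\KA)$ where $\A\subseteq\P(S)$ is adequate. The natural candidates for the basis vectors are, exactly as in the proof of Theorem \ref{MTh: Xr}, the coordinate functions $f_s\in\C(\KA)$ given by $f_s(A)=1$ if $s\in A$ and $0$ otherwise (these are continuous since $\{A\in\KA\colon s\in A\}$ is clopen), together with the functionals $\mu_s:=\delta_{\{s\}}$ (which are legitimate measures on $\KA$ since $\{s\}\in\A$ by the first axiom of an adequate family). As before $\langle\mu_s,f_t\rangle=\delta_{s,t}$, so $\{f_s;\mu_s\}_{s\in S}$ is a biorthogonal system. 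The key point, however, is that $\{f_s\}_{s\in S}$ is typically \emph{not} linearly dense in $\C(\KA)$: e.g. the constant function $1$ need not be in the closed span (for $\A=[S]^{\le 1}\cup\{S\}$ say). So the plan is to first normalise and deal with completeness, then verify norming.

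First I would handle the norming property, which is the easy half and mirrors the computation at the end of the proof of Theorem \ref{MTh: Xr}. For any $g\in\C(\KA)$ one has $\|g\|=\max_{A\in\KA}|g(A)|$, and since $\KA$ is the closure of its finite members (axiom (iii) forces $\A$ to be determined by its finite sets, and finite members are clearly dense), $\|g\|=\sup\{|\langle\delta_A,g\rangle|\colon A\in\A \text{ finite}\}$. For a finite $A=\{s_1,\dots,s_k\}\in\A$, every subset of $A$ is in $\A$ by axiom (ii), so the inclusion-exclusion identity $\delta_A=\sum_{\emptyset\ne B\subseteq A}(-1)^{|B|+1}\chi_B$ holds, where $\chi_B(C)=1$ iff $B\subseteq C$; and each $\chi_B=\prod_{s\in B}f_s$. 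More directly: on the closed span of $\{f_s\}_{s\in S}$ (and the constants, once we add them — see below) one checks that $\delta_A\restriction$ lies in the span of the $\mu_s\restriction$ plus possibly an extra functional, by the same telescoping as in Claim \ref{Claim: sum of Diracs}. The upshot is that $\|g\|$ is computed by functionals in the span of the $\mu$-side of the basis, giving $1$-norming.

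The real obstacle — the second half — is completeness: we must enlarge $\{f_s\}_{s\in S}$ to an M-basis of all of $\C(\KA)$ while keeping it $1$-norming. The standard remedy is that $\C(\KA)=\overline{\mathrm{span}}\{f_s\}_{s\in S}\oplus(\text{a complement})$ only when $\A$ has a largest element; in general one should instead observe that $\KA$ is a \emph{Valdivia} (indeed semi-Eberlein) compact with $\Sigma$-subset the countably supported members of $\A$, and invoke the structure theory: $\C(\K)$ for $\K$ Valdivia with a $\Sigma$-subset admits a $1$-norming, even countably $1$-norming, M-basis — this is the classical result of Valdivia, see \cite[Theorem 5.6]{HMVZ} or \cite{Kalenda survey}. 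So the cleanest route is: (1) show $\KA$ is Valdivia via the dense set of finite members lying in $\Sigma(S)$ (immediate, as in the remark after the statement of Theorem \ref{MTh: Kr}); (2) quote that Valdivia compacta give $\C(\K)$ spaces with $1$-norming M-bases. Alternatively, and more in the self-contained spirit of the paper, one can build the M-basis by hand: use the $f_s$ for the "small" directions and extend by a PRI-type transfinite induction along the natural well-ordering of $S$, at each step of countable cofinality adding countably many functions supported on an initial segment to fill in the missing part of $\C$ of that segment, checking that the telescoping of $\delta_A$'s still expresses norms through the dual functionals. I expect the bookkeeping of this explicit construction — ensuring the added vectors remain biorthogonal to the $\mu_s$ and that $1$-norming (not merely $\lambda$-norming) survives the extension — to be the main technical burden; the adequacy axioms (ii) and (iii) are precisely what make the $\delta_A$ telescoping work and are therefore the crux of why norming is preserved.
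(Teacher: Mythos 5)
You have correctly isolated the difficulty --- the coordinate functions $f_s$, $s\in S$, are not linearly dense in $\C(\KA)$ --- but neither of your two proposed remedies closes the gap. Route (1) rests on a false premise: it is \emph{not} true that $\C(\K)$ has a ($1$-)norming M-basis whenever $\K$ is Valdivia. The space $[0,\omega_1]$ is Valdivia, yet $\C([0,\omega_1])$ admits \emph{no} norming M-basis at all by the Alexandrov--Plichko theorem \cite{AP} quoted in Section \ref{Sec: preliminaries}; what the Valdivia structure theory yields is only a \emph{countably} $1$-norming (or strong) M-basis, which is a strictly weaker property and useless here. Route (2) is precisely the construction that would have to be carried out, and as stated it is only a sketch; since the same PRI machinery applies to $[0,\omega_1]$ and cannot produce a norming basis there, you would need to exhibit exactly where adequacy enters the transfinite induction, which you do not do.

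The idea you are missing is to change the index set: take as basis vectors not the coordinate functions but the indicators $f_\Gamma$ of the clopen sets $\{A\in\KA\colon \Gamma\subseteq A\}$ for \emph{all} finite $\Gamma\in\A$, i.e.\ $f_\Gamma=\prod_{s\in\Gamma}f_s$, indexed by $\FA=\{ \Gamma\in\A\colon|\Gamma|<\omega\}$. Then ${\rm span}\{f_\Gamma\}_{\Gamma\in\FA}$ contains $f_\emptyset\equiv 1$, separates points of $\KA$, and is closed under multiplication (because $f_{\Gamma_1}f_{\Gamma_2}=f_{\Gamma_1\cup\Gamma_2}$ when $\Gamma_1\cup\Gamma_2\in\A$ and $\equiv 0$ otherwise), so Stone--Weierstra\ss\ gives density and the completeness problem disappears. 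The biorthogonal functionals are the inclusion--exclusion combinations
$$\mu_\Gamma=\sum_{n=0}^{|\Gamma|}(-1)^{|\Gamma|-n}\sum_{\Delta\in[\Gamma]^n}\delta_\Delta,$$
which are legitimate because axiom (ii) puts every $\Delta\subseteq\Gamma$ in $\A$; biorthogonality is a binomial identity, and inverting the triangular system shows by induction on $|\Gamma|$ that every $\delta_\Gamma$, $\Gamma\in\FA$, lies in ${\rm span}\{\mu_\Gamma\}_{\Gamma\in\FA}$. Since the finite members of $\A$ are dense in $\KA$ (axiom (iii)), this yields $1$-norming exactly as in your first paragraph. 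Your norming computation is essentially sound once transplanted to this larger system; the completeness argument is the content you still need to supply.
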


\begin{proof} Let $\FA=\{A\in \A\colon |A|<\omega\}$, a dense subset of $\KA$. For $\Gamma\in \FA$, set
\begin{equation*}
    f_{\Gamma}(A)=\begin{cases}
    1 & \mbox{if }\Gamma\subseteq A,\\
    0 & \mbox{otherwise},
\end{cases}
\end{equation*}
\begin{equation*}
    \mu_{\Gamma}=\sum_{n=0}^{|\Gamma|}(-1)^{|\Gamma|-n}\sum_{\Delta\in[\Gamma]^n}\delta_{\Delta}.
\end{equation*}
We shall show that the family $\{f_{\Gamma};\mu_{\Gamma}\}_{\Gamma\in\FA}$ is the desired $1$-norming M-basis (note that $f_\Gamma$ is continuous, since $\Gamma$ is a finite set).

We start by showing that $\{f_\Gamma;\mu_\Gamma\} _{\Gamma\in\FA}$ is a biorthogonal system, that is $\langle\mu_{\Gamma_1}, f_{\Gamma_2}\rangle =1$ if $\Gamma_1= \Gamma_2$ and $\langle \mu_{\Gamma_1}, f_{\Gamma_2} \rangle=0$ elsewhere. Indeed, assume $\Gamma_1=\Gamma_2$. Then, if $n\leq |\Gamma_1|$ and $\Delta\in [\Gamma_1]^n$, we have $f_{\Gamma_2}(\Delta)\neq 0$ if and only if $n=|\Gamma_1|$ and $\Delta=\Gamma_1$. Therefore, $\langle \mu_{\Gamma_1}, f_{\Gamma_2}\rangle=f_{\Gamma_2}(\Gamma_2)=1$. By the same argument, $\langle\mu_{\Gamma_1}, f_{\Gamma_2}\rangle=0$ whenever $\Gamma_2\nsubseteq \Gamma_1$. 

On the other hand, suppose that $\Gamma_2\subseteq \Gamma_1$ and $\Gamma_1\neq \Gamma_2$.
Then, for every $n<|\Gamma_2|$ and every $\Delta\in[\Gamma_1]^n$ we have $f_{\Gamma_2}(\Delta)=0$. Moreover, when $n\geq |\Gamma_2|$, we have 
$$|\{\Delta\in [\Gamma_1]^n\colon f_{\Gamma_2}(\Delta) \neq 0\}|=|\{\Delta\in [\Gamma_1]^n\colon \Gamma_2 \subseteq \Delta\}|= {|\Gamma_1|-|\Gamma_2|\choose n-|\Gamma_2|}. $$
Therefore,
\begin{equation*}
    \begin{split}
    \langle\mu_{\Gamma_1}, f_{\Gamma_2}\rangle &=\sum_{n=|\Gamma_2|} ^{|\Gamma_1|}(-1)^{|\Gamma_1|-n}\sum_{\Delta\in[\Gamma_1]^n} f_{\Gamma_2}(\Delta)\\ &=\sum_{n=|\Gamma_2|}^{|\Gamma_1|} (-1)^{|\Gamma_1|-n}{|\Gamma_1|-|\Gamma_2|\choose n-|\Gamma_2|}\\ &=\sum_{n=0}^{|\Gamma_1|-|\Gamma_2|}(-1)^{|\Gamma_1|-|\Gamma_2|-n}{|\Gamma_1|-|\Gamma_2|\choose n}=0,  
    \end{split}
\end{equation*}
where the last equality depends on the binomial theorem (and the fact that $|\Gamma_1|-|\Gamma_2|>0$). \smallskip

Next, we show that ${\rm span}\{f_{\Gamma}\}_{\Gamma\in \FA}$ is dense in $\C(\KA)$. Since $f_\emptyset\equiv 1\in {\rm span}\{f_{\Gamma}\}_{\Gamma\in \FA}$ and $\{f_{\Gamma}\}_{\Gamma\in \FA}$ separates points of $\KA$, by the Stone--Weierstra\ss\ theorem, it is enough to prove that ${\rm span}\{f_{\Gamma}\}_{\Gamma\in \FA}$ is a subalgebra of $\C(\KA)$. For $\Gamma_1, \Gamma_2 \in \FA$, we have 
\begin{equation*}
    f_{\Gamma_1}\cdot f_{\Gamma_2}(A)=\begin{cases}
    1 & \mbox{if }\Gamma_1\cup\Gamma_2\subseteq A,\\
    0 & \mbox{otherwise}.
\end{cases}
\end{equation*}
Therefore, if $\Gamma_1\cup \Gamma_2 \in \FA$, then $f_{\Gamma_1}\cdot f_{\Gamma_2}=f_{\Gamma_1\cup\Gamma_2}\in {\rm span}\{f_{\Gamma}\}_{\Gamma\in \FA}$. On the other hand, if $\Gamma_1\cup \Gamma_2 \notin \FA$, there is no $A\in\KA$ with $\Gamma_1\cup \Gamma_2\subseteq A$. Hence, $f_{\Gamma_1}\cdot f_{\Gamma_2}\equiv 0$. In either case, $f_{\Gamma_1}\cdot f_{\Gamma_2}\in {\rm span}\{f_{\Gamma}\}_{\Gamma\in \FA}$, whence ${\rm span}\{f_{\Gamma}\}_{\Gamma\in \FA}$ is a subalgebra of $\C(\KA)$.\smallskip

Finally, we show that ${\rm span}\{\mu_{\Gamma}\} _{\Gamma\in \FA}$ is a $1$-norming subspace, namely (by the Hahn--Banach theorem) that ${\rm span}\{\mu_{\Gamma}\} _{\Gamma\in \FA}\cap B_{\M(\KA)}$ is $w^*$-dense in $B_{\M(\KA)}$. Since $\FA$ is dense in $\KA$, it suffices to show that $\{\delta_{\Gamma} \}_{\Gamma\in \FA}\subseteq {\rm span}\{\mu_{\Gamma}\}_{\Gamma\in \FA}$. Indeed, we show by induction that $\{\delta_{\Gamma}\}_{\Gamma\in \FA,|\Gamma|\leq n}\subseteq {\rm span}\{\mu_{\Gamma} \}_{\Gamma\in \FA}$, for every $n<\omega$.
\begin{itemize}
    \item For $n=0$, we just have $\delta_{\emptyset}= \mu_{\emptyset}\in {\rm span}\{\mu_{\Gamma}\}_{\Gamma\in \FA}$.
    \item Inductively, suppose that $n\geq1$ and $\{\delta_{\Gamma}\} _{\Gamma\in \FA,|\Gamma|\leq n-1}\subseteq {\rm span}\{\mu_{\Gamma}\}_{\Gamma\in \FA}$. If $\Gamma \in \FA$ is such that $|\Gamma|=n$, then $$\delta_{\Gamma}=\mu_\Gamma - \sum_{j=0}^{n-1}(-1)^{n-j}\sum_{\Delta\in[\Gamma]^j}\delta_{\Delta}\in {\rm span}\{\mu_{\Gamma}\}_{\Gamma\in \FA},$$
    by the inductive assumption.
\end{itemize}
Therefore, $\{f_{\Gamma};\mu_{\Gamma}\}_{\Gamma\in\FA}$ is a 1-norming M-basis for $\C(\KA)$, as desired.
\end{proof}

\begin{remark} We denote by $\sigma_1(\Gamma)$ the one-point compactification of the discrete set $\Gamma$. It is fairly easy to see that $\sigma_1(\Gamma)^\omega$ is (homeomorphic to) an adequate compact, see, \emph{e.g.}, \cite{P}. Therefore, our previous result generalises, with a similar (but cleaner) proof, \cite[Theorem 3]{Hajek}, where a $1$-norming M-basis is constructed in $\C(\sigma_1(\Gamma)^\omega)$.

Moreover, let us observe that in case $\KA$ is scattered, namely every set in $\A$ is finite, the M-basis constructed in the proof of Theorem \ref{t: adequate implies 1-norming} is even shrinking, as it is not hard to see from the above argument. This is, of course, in complete accordance with Theorem \ref{Th: shrinking M-basis}.
\end{remark}

In conclusion to our note, let us recall the classical result that $\C(2^\omega)$ has no unconditional basis, \cite{Karlin}. Here, $2^\omega$ denotes the Cantor set that, in our notation, is merely $\P(\omega)$, an adequate compact. In particular, the M-basis constructed in Theorem \ref{t: adequate implies 1-norming} is, in general, not unconditional.\medskip

{\bf Acknowledgements.} The authors wish to express their gratitude to Mari\'an Fabian and Gilles Godefroy for their insightful remarks on the problem considered in the article. Such remarks were extremely useful when preparing the final version of the manuscript.


\end{document}